\newtheorem{theorem}{Theorem}
\newtheorem{lem}{Lemma}
\newtheorem{prop}{Proposition}
\newtheorem{rem}{Remark}
\newcommand{\norm}[1]{\left\Vert#1\right\Vert}
\newcommand{\abs}[1]{\left\vert#1\right\vert}
\newcommand{\To}{\rightarrow}
\newcommand{\bsa}{\boldsymbol{a}}
\newcommand{\bsk}{\boldsymbol{k}}
\newcommand{\bsl}{\boldsymbol{l}}
\newcommand{\bsx}{\boldsymbol{x}}
\newcommand{\bsh}{\boldsymbol{h}}
\newcommand{\bst}{\boldsymbol{t}}
\newcommand{\1}{\boldsymbol{1}}
\newcommand{\bsb}{\boldsymbol{b}}
\newcommand{\bsy}{\boldsymbol{y}}
\newcommand{\cH}{{\cal H}}
\newcommand{\bszero}{\boldsymbol{0}}
\newcommand{\bsone}{\boldsymbol{1}}
\newcommand{\rd}{\,\mathrm{d}}
\newcommand{\e}{\varepsilon}
\newcommand{\NN}{\mathbb{N}}
\newcommand{\RR}{\mathbb{R}}
\newcommand{\rdots}{\mathinner{\mkern1mu\lower-1\p@\vbox{\kern7\p@\hbox{.}}
\mkern2mu \raise4\p@\hbox{.}\mkern2mu\raise7\p@\hbox{.}\mkern1mu}}
\date{}
\begin{document}

\title{Integration in Hermite spaces of analytic functions}

\author{Christian Irrgeher, Peter Kritzer, Gunther Leobacher\\ and Friedrich Pillichshammer\thanks{The authors are supported by the Austrian Science Fund (FWF): Projects
F5506-N26 and P23389-N18 (Kritzer), F5508-N26 (Leobacher), and F5509-N26 (Irrgeher and Pillichshammer), respectively. The projects F5506-N26, F5508-N26, and F5509-N26 are part of the Special
Research Program "Quasi-Monte Carlo Methods: Theory and Applications".}}

\maketitle

\begin{abstract}
We study integration in a class of Hilbert spaces of analytic functions defined on the $\RR^s$. 
The functions are characterized by the property that their Hermite coefficients decay exponentially fast. We use Gauss-Hermite integration 
rules and show that the error of our algorithms decays exponentially fast. Furthermore, we give necessary and sufficient conditions under which 
we achieve exponential convergence with weak, polynomial, and strong polynomial tractability. 
\end{abstract}
 
\noindent\textbf{Keywords:} Linear integration algorithms, Hermite spaces, Gauss-Hermite rules, tractability.
 
\noindent\textbf{2010 MSC:} 65D30, 65D32, 65Y20.


\section{Introduction}

In recent years, the theory of tractability of integration and approximation
in reproducing kernel Hilbert spaces evolved into one of the key topics
of complexity theory. Starting with the seminal work by Hickernell (see, e.g., \cite{H95})
Sloan and Wo\'zniakowski (see, e.g., \cite{SW98}), many authors have shown different types of 
tractability (or intractability) for different types of spaces. 
Here, the main attention was on spaces consisting of functions on the
$s$-dimensional unit cube. Notable exceptions are several papers by Kuo, Wasilkowski, 
Wo\'{z}niakowski and their co-authors (see, e.g., \cite{KWW06, KW12, NK14, WW02}),
where tractability of integration over unbounded domains is studied. 

Among the main reasons why the focus has been largely on function spaces over
the unit cube are the following: first of all, classical discrepancy theory
mostly considers point sets and sequences in the unit cube, and most explicit
constructions of low-discrepancy point sets and sequences have been carried
out within this framework. The reason for this, which is also
the second point in our list, is that most integrals arising in practice
can be transformed into integrals over the unit cube, at least in principle. 
A third reason is a practical one: on the unit cube we have orthogonal 
function systems, namely, the trigonometric polynomials and the Walsh functions,
that are very flexible and for which series expansion have been extensively 
studied. 

However, there are good reasons for considering spaces of functions on the
$\RR^s$. First of all, they provide the natural setup for many applications,
in particular those from mathematical finance where many models are driven
by Brownian motion.  While the corresponding integration problems can be 
transformed to ones on the unit cube, the transformed problem typically 
does not belong to any of the function classes for which tractability of
integration can be shown.

Another good reason for considering integration on the $\RR^s$, in particular
with respect to standard Gaussian measure and Lebesgue measure, is the 
symmetry of the space with respect to orthogonal transforms. 
It has been found that many problems from quantitative finance benefit from orthogonal transforms when evaluated using quasi-Monte Carlo methods, see, e.g., 
\cite{ABG96,  IL12, L12,  MC96, P02, WS11}. Examples of such orthogonal transforms are provided by the Brownian bridge construction 
or the principal component construction of the paths of Brownian motion.

In this paper, we study a reproducing kernel Hilbert space with a kernel function of the form 
$$K_{r}(\bsx,\bsy)=\sum_{\bsk \in \NN_0^s} r(\bsk) H_{\bsk}(\bsx) H_{\bsk}(\bsy)\ \ \ \ \mbox{ for }\ \ \bsx,\bsy \in \RR^s,$$ 
where  $r: \NN_0^s \rightarrow \RR^+$ is a suitably chosen function, and $H_{\bsk}$ denotes, for $\bsk=(k_1,\ldots,k_s)$, 
the product of the $k_j$th Hermite polynomials. 
Function spaces of this structure have been previously studied in, e.g., \cite{IL}, 
where those spaces have been named ``Hermite spaces''. In this paper, we focus on a 
special type of the function $r(\bsk)$. We choose a fixed number 
$\omega\in (0,1)$ and choose two sequences $\bsa=\{a_j\}$ and $\bsb=\{b_j\}$ of real numbers. 
The function space $\mathcal{H}(K_{s,\bsa,\bsb,\omega})$ considered 
in this paper is then characterized by setting 
$$r(\bsk):=\omega^{\abs{\bsk}_{\bsa,\bsb}}=\omega^{\sum_{j=1}^s a_j k_j^{b_j}}.$$ 
It can be shown that the elements of our function space are analytic and this is done in Appendix~\ref{app_a}. We are interested in studying the numerical approximation of 
$$I_s(f)=\int_{\RR^s} f(\bsx) \varphi_s(\bsx) \rd \bsx \ \ \ \mbox{ for }\ \ f \in \mathcal{H}(K_{s,\bsa,\bsb,\omega}),$$ 
where $\varphi_s$ is the density of the $s$-dimensional standard Gaussian measure. We approximate $I_s (f)$ by linear algorithms of the form 
$$A_{n,s}(f)=\sum_{i=1}^n \alpha_i f(\bsx_i) \ \ \ \mbox{ for }\ \ f \in \mathcal{H}(K_{s,\bsa,\bsb,\omega}),$$
where $\bsx_1,\ldots,\bsx_n \in \RR^s$ and $\alpha_1,\ldots,\alpha_n \in \RR$.

We then study the worst-case error of the algorithm $A_{n,s}$ , defined by 
$$e(A_{n,s},K_{s,\bsa,\bsb,\omega})=\sup_{f \in \mathcal{H}(K_{s,\bsa,\bsb,\omega}) \atop \|f\|_{K_{s,\bsa,\bsb,\omega}} 
\le 1}\abs{I_s (f)-A_{n,s} (f)},$$
where $\norm{\cdot}_{K_{s,\bsa,\bsb,\omega}}$ denotes the norm in $\mathcal{H}(K_{s,\bsa,\bsb,\omega})$. Furthermore, we define the $n$th minimal worst-case error,
$$
e(n,s)=\inf_{A_{n,s}}\ e(A_{n,s},K_{s,\bsa,\bsb,\omega}),
$$
where the infimum is extended over all linear algorithms using $n$ function evaluations.

Our first goal in this paper is to study conditions on the parameters characterizing the space 
$\mathcal{H}(K_{s,\bsa,\bsb,\omega})$ such that we obtain
exponential convergence of $e(n,s)$. By exponential convergence we mean the existence of 
a number $q\in(0,1)$ and functions $p,C,C_1:\NN\to (0,\infty)$ such that
$$
e(n,s)\le C(s)\, q^{\,(n/C_1(s))^{\,p(s)}}
\ \ \ \ \ \mbox{for
all} \ \ \ \ \ s, n\in \NN.
$$
More details on exponential convergence are given in Section~\ref{sec_ect}. 

In Theorem~\ref{thmint(u)exp}, we are going to show (matching) necessary and sufficient conditions under which we achieve exponential 
convergence, and uniform exponential convergence, which holds if $p(s)$ can be bounded uniformly in $s$. 

Our second goal is to study various notions of tractability, i.e. the asymptotic behaviour of the information complexity of integration in $\mathcal{H}(K_{s,\bsa,\bsb,\omega})$, 
$$
n(\e,s)=\min\{n\,:\, e(n,s)\le \e\},
$$
which is the minimal number $n$ of nodes needed to obtain an $\e$-approximation to $I_s (f)$, with respect to $s$ and $\e^{-1}$. To be more precise, 
we study different notions of Exponential Convergence-Tractability, which have previously been dealt with in~\cite{DKPW13,KPW14,KPW14a}. 
Roughly speaking, we mean by 
tractability that $n(\e,s)$ lacks a certain disadvantageous dependence on $s$ but also on $\e^{-1}$. We are going to derive 
necessary and sufficient conditions on 
the weight sequences $\bsa$ and $\bsb$ for three different types of tractability in Theorem~\ref{thmint(u)exp}. We remark that for two of the 
three types of tractability considered here (polynomial and strong polynomial tractability), 
our necessary and sufficient conditions match, and only for one type (weak tractability) there remains a small gap between those conditions. 

Overall, our results in Theorem~\ref{thmint(u)exp} are of a similar flavor as those in~\cite{DKPW13,DLPW11,KPW14,KPW14a}, 
but there are some major differences, most importantly that the results in those papers hold for certain analytic functions 
defined on $[0,1]^s$, and here we deal with functions defined on the $\RR^s$. We further remark that all sufficient results shown 
in this paper are based on constructive algorithms, i.e., 
we explicitly give the form of the algorithms $A_{n,s} (f)$ yielding the desired error bounds. In the case considered here, we are going to 
use Cartesian products of Gauss-Hermite rules as integration algorithms. 

The rest of the article is structured as follows. In Section~\ref{secinthermite}, we define the Hermite space $\mathcal{H}(K_{s,\bsa,\bsb,\omega})$, 
and give all details regarding the problem setting. In Section~\ref{sec_ect}, we give the precise definitions of exponential error convergence and 
we recall the notions of tractability used in this paper. In Section~\ref{secmain}, we present Theorem~\ref{thmint(u)exp}, which summarizes all 
results in this paper, and give some comments on these findings. The proof of Theorem~\ref{thmint(u)exp} is partly done in Section~\ref{necUEXP}, where 
we show lower bounds on the error of linear integration algorithms in $\mathcal{H}(K_{s,\bsa,\bsb,\omega})$, thereby obtaining necessary conditions 
for (uniform) exponential convergence and the different tractability notions. On the other hand, in Section~\ref{secGHI}, we study concrete examples 
of integration algorithms based on Gauss-Hermite rules and outline sufficient conditions for (uniform) exponential convergence and tractability. 
Finally, Appendix~\ref{app_a} contains a proof of analyticity of the elements of $\mathcal{H}(K_{s,\bsa,\bsb,\omega})$ and Appendix~\ref{app_b} shows
an example of a nontrivial function which belongs to such a Hermite space.

\section{Integration in the Hermite space}\label{secinthermite}

\subsection{Hermite polynomials and the Hermite space}

We briefly summarize some facts on \textit{Hermite polynomials}; for further details, we refer to~\cite{IL} and the references therein. 
For $k \in \NN_0$ the $k$th Hermite polynomial is given by 
$$H_k(x)=\frac{(-1)^k}{\sqrt{k!}} \exp(x^2/2) \frac{\rd^k}{\rd x^k} \exp(-x^2/2),$$ 
which is sometimes also called normalized probabilistic Hermite polynomial. Here we follow the definition given in \cite{B98}, but we remark
that there are slightly different ways to introduce Hermite polynomials (see, e.g., \cite{szeg}). For $s \ge 2$, $\bsk=(k_1,\ldots,k_s)\in \NN_0^s$, and 
$\bsx=(x_1,\ldots,x_s)\in \RR^s$ 
we define $s$-dimensional Hermite polynomials by $$H_{\bsk}(\bsx)=\prod_{j=1}^s H_{k_j}(x_j).$$ 
It is well-known (see again~\cite{B98}) that the sequence of Hermite polynomials $\{H_{\bsk}(\bsx)\}_{\bsk \in \NN_0^s}$ 
forms an orthonormal basis of the function space $L^2(\RR^s,\varphi_s)$, where $\varphi_s$ denotes the 
density of the $s$-dimensional standard Gaussian measure, 
$$\varphi_s(\bsx)=\frac{1}{(2 \pi)^{s/2}} \exp\left(-\frac{\bsx \cdot \bsx}{2}\right),$$ 
where ``$\cdot$'' is the standard inner product in $\RR^s$.  

Similar to what has been done in~\cite{IL}, we are now going to define function spaces based on Hermite polynomials. These 
spaces are Hilbert spaces with a \textit{reproducing kernel}. For details on reproducing kernel Hilbert spaces, we refer to~\cite{A50}. 

Let $r: \NN_0^s \rightarrow \RR^+$ be a summable function, i.e., $\sum_{\bsk\in\NN_0^s} r(\bsk) < \infty$. Define a kernel function 
$$K_{r}(\bsx,\bsy)=\sum_{\bsk \in \NN_0^s} r(\bsk) H_{\bsk}(\bsx) H_{\bsk}(\bsy)\ \ \ \ \mbox{ for }\ \ \bsx,\bsy \in \RR^s$$ 
and inner product 
$$\langle f,g\rangle_{K_{r}} =\sum_{\bsk \in \NN_0^s} \frac{1}{r(\bsk)} \widehat{f}(\bsk) \widehat{g}(\bsk),$$ 
where $\widehat{f}(\bsk)=\int_{\RR^s} f(\bsx) H_{\bsk}(\bsx) \varphi_s(\bsx)\rd \bsx$ is the $\bsk$th \textit{Hermite coefficient} of $f$. 
Let $\mathcal{H}(K_r)$ be the reproducing kernel Hilbert space corresponding to $K_{r}$, which we will call a Hermite space in the following. 
The norm in $\mathcal{H}(K_r)$ is given by $\| f \|_{K_{r}}^2 =\langle f , f \rangle_{K_{r}}$. From this we see that 
the functions in $\mathcal{H}(K_r)$ are characterized by the decay rate of their Hermite coefficients, which is regulated by the function $r$. 
Roughly speaking, the faster $r$ decreases as $\bsk$ grows, the faster the Hermite coefficients of the elements of $\mathcal{H}(K_r)$ decrease. 
In the recent paper~\cite{IKLP14a} the case of polynomially decreasing $r$ and in \cite{IL}, the case of polynomially decreasing $r$ as well as exponentially decreasing $r$ was considered. In this paper, we continue the work on exponentially decreasing $r$, thereby extending the results in~\cite{IL}.

To define our function $r$, we first introduce two weight sequences of real numbers, $\bsa=\{a_j\}$ and $\bsb=\{b_j\}$, where we assume that 
\begin{equation}\label{aabb}
1\le a_1\le a_2\le a_3\le\ldots\ \ \ \ \mbox{and}\ \ \ \ 1\le b_1\le b_2\le b_3\le\ldots.
\end{equation}
Furthermore, we fix a parameter $\omega\in (0,1)$. For a vector $\bsk=(k_1,\ldots,k_s)\in\NN_0^s$, we consider  
$$r(\bsk)= \omega^{\abs{\bsk}_{\bsa,\bsb}}:=\omega^{\sum_{j=1}^s a_j k_j^{b_j}}.$$
In our case we modify the notation for the kernel function to 
$$K_{s,\bsa,\bsb,\omega} (\bsx,\bsy):=\sum_{\bsk\in\NN_0^s} \omega^{\abs{\bsk}_{\bsa,\bsb}} H_{\bsk}(\bsx) H_{\bsk} (\bsy).$$
From now on, we deal with the corresponding reproducing kernel Hilbert space $\cH(K_{s,\bsa,\bsb,\omega})$. 
Our concrete choice of $r$ now decreases exponentially fast as $\bsk$ grows, which influences the smoothness of the elements in 
$\cH(K_{s,\bsa,\bsb,\omega})$. Indeed, it can be shown that functions $f \in \cH(K_{s,\bsa,\bsb,\omega})$ are analytic (see Appendix~\ref{app_a}).
We remark that reproducing kernel Hilbert spaces of a similar flavor were previously considered in~\cite{DKPW13,DLPW11,KPW14,KPW14a}, but 
the functions considered there were one-periodic functions defined on the unit cube $[0,1]^s$. Here, we study functions which are defined on the $\RR^s$, 
which is a major difference. Obviously, $\cH(K_{s,\bsa,\bsb,\omega})$ contains all polynomials on the $\RR^s$, but there are further functions 
of practical interest which belong to such spaces.  For example, it is easy to verify (see Appendix~\ref{app_b}) that $f(x_1,\ldots,x_s)=\exp(\frac{1}{\sqrt{s}}\sum_{j=1}^{s}x_j)$
is an element of the Hilbert space $\cH(K_{s,\bsa,\bsone,\omega})$ for any $\bsa$. Functions of a similar form occur in problems of financial derivative pricing, see, e.g., \cite{ll}.

\subsection{Integration}

We are interested in numerical approximation of the values of integrals 
$$I_s(f)=\int_{\RR^s} f(\bsx) \varphi_s(\bsx) \rd \bsx \ \ \ \mbox{ for }\ \ f \in \mathcal{H}(K_{s,\bsa,\bsb,\omega}).$$ 
Without loss of generality, see, e.g., \cite[Section~4.2]{NW08} or \cite{TWW88}, we can restrict ourselves to 
approximating $I_s(f)$ by means of {\it linear algorithms} of the form 
\begin{equation}\label{linAlg}
A_{n,s}(f)=\sum_{i=1}^n \alpha_i f(\bsx_i) \ \ \ \mbox{ for }\ \ f \in \mathcal{H}(K_{s,\bsa,\bsb,\omega})
\end{equation}
with integration nodes $\bsx_1,\ldots,\bsx_n \in \RR^s$ and weights $\alpha_1,\ldots,\alpha_n \in \RR$. 
For $f \in \mathcal{H}(K_{s,\bsa,\bsb,\omega})$ let 
$${\rm err}(f):=I_s(f) -A_{n,s}(f).$$ 
The {\it worst-case error} of the algorithm $A_{n,s}$ is then defined as the worst performance of $A_{n,s}$ over the unit ball 
of $\mathcal{H}(K_{s,\bsa,\bsb,\omega})$, i.e., 
$$e(A_{n,s},K_{s,\bsa,\bsb,\omega})=\sup_{f \in \mathcal{H}(K_{s,\bsa,\bsb,\omega}) \atop \|f\|_{K_{s,\bsa,\bsb,\omega}} 
\le 1}\left|{\rm err}(f)\right|.$$ 
Using standard arguments (see, e.g., \cite[Theorem~3.5]{DKS} or \cite[Proposition~2.11]{DP10}) from the theory of 
numerical integration in reproducing kernel Hilbert spaces we obtain 
\begin{eqnarray*}
e^2(A_{n,s},K_{s,\bsa,\bsb,\omega})&= & \int_{\RR^s} 
\int_{\RR^s} K_{s,\bsa,\bsb,\omega}(\bsx,\bsy)\varphi_s(\bsx)\varphi_s(\bsy)\rd \bsx \rd \bsy\\
&& -2 \sum_{i=1}^n \alpha_i\int_{\RR^s} K_{s,\bsa,\bsb,\omega}(\bsx,\bsx_i) \varphi_s(\bsx)\rd \bsx\\
&& +\sum_{i=1}^n \sum_{j=1}^n \alpha_i \alpha_j K_{s,\bsa,\bsb,\omega}(\bsx_i,\bsx_j). 
\end{eqnarray*}
Inserting the kernel function yields
\begin{align}\label{formula_wc-error}
e^2(A_{n,s},K_{s,\bsa,\bsb,\omega})= & \left(-1+\sum_{i=1}^n \alpha_i\right)^2+\sum_{\bsk \in \NN_0^s \setminus \{\bszero\}} \omega^{|\bsk|_{\bsa,\bsb}}\left(\sum_{i=1}^n \alpha_i H_{\bsk}(\bsx_i)\right)^2.
\end{align}

Let $e(n,s)$ be the {\it $n$th minimal worst-case error},
$$
e(n,s)=\inf_{A_{n,s}}\ e(A_{n,s},K_{s,\bsa,\bsb,\omega}),
$$
where the infimum is extended over all linear algorithms of the form \eqref{linAlg}, i.e., 
over all nodes $\bsx_1,\ldots,\bsx_n$ and  all weights $\alpha_1,\ldots,\alpha_n$.

For $n=0$, the best we can do is to approximate $I_s(f)$ simply by zero, and
$$
e(0,s)=\|I_s\|=\int_{\RR^s} 
\int_{\RR^s} K_{s,\bsa,\bsb,\omega}(\bsx,\bsy)\varphi_s(\bsx)\varphi_s(\bsy)\rd \bsx \rd \bsy=1\ \ \ \ \ \mbox{for all}\ \ \ \ \ s\in \NN.
$$
Hence, the integration problem is well normalized for all $s$.

For $\e\in(0,1)$, we define the {\it information complexity} of integration
$$
n(\e,s)=\min\{n\,:\, e(n,s)\le \e\}
$$
as the minimal number of function values needed to obtain an
$\e$-approximation.

\section{Exponential convergence and tractability}\label{sec_ect}

Since the functions belonging to the function space $\mathcal{H}(K_{s,\bsa,\bsb,\omega})$ are very smooth, 
it is natural to expect that, by using suitable algorithms, we should be able to obtain an integration error that converges to zero very quickly as $n$ increases. Indeed, 
what we would like to achieve is exponential convergence of the integration error, and we first define this type of convergence in detail.

We say that we achieve  \emph{exponential convergence} (EXP)
for $e(n,s)$ if
there exists a number $q\in(0,1)$ and
functions $p,C,C_1:\NN\to (0,\infty)$ such that
\begin{equation}\label{exrate}
e(n,s)\le C(s)\, q^{\,(n/C_1(s))^{\,p(s)}}
\ \ \ \ \ \mbox{for
all} \ \ \ \ \ s, n\in \NN.
\end{equation}
We refer to \cite{DKPW13, KPW14, KPW14a} for detailed information on the notion of exponential convergence.
If \eqref{exrate} holds we would like to find the largest possible
rate $p(s)$ of exponential convergence
defined as
$$
p^*(s) = \sup \{\,p\in (0,\infty): \exists \, C, C_1 \in (0, \infty) \, \mbox{ such that } \, \forall n \in \NN: e(n, s) \le C q^{(n/C_1)^p} \}.
$$

We say that we achieve \emph{uniform exponential convergence} (UEXP)
for $e(n,s)$ if the function $p$
in \eqref{exrate} can be taken as a constant function, i.e.,
$p(s)=p>0$ for all $s\in\NN$. Similarly, let
$$
p^* = \sup \{ p \in (0, \infty): \exists \, C, C_1: \NN \rightarrow (0, \infty) \, \mbox{ such that } \, \forall n, s \in \NN: e (n, s) \le C q^{(n/C_1)^p} \} 
$$
denote the largest rate of uniform exponential convergence.
\vskip 1pc
Exponential convergence implies that asymptotically, with
respect to $\e$ tending to zero, we need $\mathcal{O}(\log^{1/p(s)}
\e^{-1})$ function evaluations to compute an
$\e$-approximation to an integral.
However, it is not
clear how long we have to wait to see this nice asymptotic
behavior especially for large $s$. This, of course, depends on
how $C(s),C_1(s)$ and $p(s)$ depend on $s$, and this is the subject of tractability. The following
tractability notions were already introduced in \cite{DKPW13, DLPW11, KPW14} but the 
corresponding nomenclature was introduced later in \cite{KPW14a}. We say that we have:
\begin{itemize}
\item[(a)] \emph{Exponential Convergence-Weak Tractability (EC-WT)} if
$$
\lim_{s+\log\,\e^{-1}\to\infty}\frac{\log\
  n(\varepsilon,s)}{s+\log\,\e^{-1}}=0\ \ \ \ \ \mbox{with}\ \ \
 \log\,0=0\ \ \mbox{by convention}.
$$
\item[(b)] \emph{Exponential Convergence-Polynomial Tractability
    (EC-PT)} if there exist non-negative
  numbers $c,\tau_1,\tau_2$ such that
$$
n(\varepsilon,s)\le
c\,s^{\,\tau_1}\,(1+\log\,\e^{-1})^{\,\tau_2}\ \ \ \ \ \mbox{for all}\ \ \ \
s\in\NN, \ \e\in(0,1).
$$
\item[(c)]
\emph{Exponential Convergence-Strong Polynomial Tractability (EC-SPT)}
if there exist non-negative
  numbers $c$ and $\tau$  such that
$$
n(\varepsilon,s)\le
c\,(1+\log\,\e^{-1})^{\,\tau}\ \ \ \ \ \mbox{for all}\ \ \ \
s\in\NN, \ \e\in(0,1).
$$
The exponent $\tau^*$ of EC-SPT is defined as
the infimum of $\tau$ for which EC-SPT holds, i.e.,
$$ \tau^* = \inf \{ \tau \ge 0: \exists \, c \in [0, \infty) \mbox{ such that } n (\varepsilon, s) \le c ( 1+\log \varepsilon^{-1})^\tau \quad \forall s \in \NN, \varepsilon \in (0,1) \}.$$
\end{itemize}

Let us give some comments on these definitions.
First, we remark that the use of the prefix EC
(exponential convergence) in (a)--(c)
is motivated by the fact that EC-PT (and therefore also
EC-SPT) implies UEXP (see~\cite{DKPW13,KPW14a}). Also EC-WT implies that $e(n,s)$
converges to zero faster than any power of $n^{-1}$ as $n$ goes to infinity (see \cite{KPW14a}), i.e., 
$$\lim_{n \rightarrow \infty} n^{\alpha} e(n,s)=0 \ \ \ \mbox{ for all }\ \ \ \alpha \in \RR^+ \ \ \ \mbox{and all} \ \ \ s \in \NN.$$

Furthermore we note, as in \cite{DKPW13, DLPW11},
that if \eqref{exrate} holds then
\begin{equation}\label{exrate2}
n(\e,s)
\le \left\lceil C_1(s) \left(\frac{\log C(s) +
    \log \e^{-1}}{\log q^{-1}}\right)^{1/p(s)}\right\rceil
\ \ \ \ \ \mbox{for all}\ \ \ s\in \NN\ \ \mbox{and}\ \ \e\in (0,1).
\end{equation}
Conversely, if~\eqref{exrate2} holds then
$$
e(n+1,s)\le C(s)\, q^{\,(n/C_1(s))^{\,p(s)}}\ \
\ \ \ \mbox{for all}\ \ \ s,n\in \NN.
$$
This means that~\eqref{exrate} and~\eqref{exrate2} are practically
equivalent. Note that $1/p(s)$ determines the power of $\log\,\e^{-1}$
in the information complexity,
whereas $\log\,q^{-1}$ only affects the multiplier of $\log^{1/p(s)}\e^{-1}$.
From this point of view, $p(s)$ is more
important than $q$. 

EC-WT means that we rule out the cases for which
$n(\e,s)$ depends exponentially on $s$ and $\log\,\e^{-1}$, whereas EC-PT means that the information complexity 
depends at most polynomially on $s$ and $\log\,\e^{-1}$. If we even have EC-SPT this translates into $n(\e,s)$ depending 
at most polynomially on $\log\,\e^{-1}$, but not on $s$ anymore. 

We remark that, in many papers, tractability is studied for problems where we do not have exponential, but usually polynomial, error convergence. 
For this kind of problems, tractability has been defined by studying how the information complexity depends on $s$ and $\e^{-1}$ (for a detailed survey 
of such results, we refer to~\cite{NW08}--\cite{NW12}). With the notions 
of EC-tractability considered in~\cite{DKPW13, DLPW11, KPW14, KPW14a} and in the present paper, however, we study how the information complexity depends 
on $s$ and $\log\,\e^{-1}$.

\section{The main results}\label{secmain}

In this section we summarize the main results of our paper. The following theorem gives necessary and sufficient conditions 
on the weight sequences $\bsa$ and $\bsb$ for (uniform) exponential convergence, and the notions of EC-WT, EC-PT, and EC-SPT.

\begin{theorem}\label{thmint(u)exp}
Consider integration defined over the Hermite space
$H(K_{s,\bsa,\bsb,\omega})$ with weight sequences $\bsa$ and
$\bsb$ satisfying~\eqref{aabb}.
\begin{enumerate}
\item\label{intexp}
EXP holds for all $\bsa$ and $\bsb$ considered, and
$$
p^{*}(s)=\frac{1}{B(s)} \ \ \ \ \
\mbox{with}\ \ \ \ \ B(s):=\sum_{j=1}^s\frac1{b_j}.
$$
\item\label{intuexp}
The following assertions are equivalent:
\renewcommand{\labelenumii}{(\alph{enumii})}
\begin{enumerate}
\item The sequence $\bsb=\{b_j\}_{j \ge 1}$ is summable, i.e., 
$$
B:=\sum_{j=1}^\infty\frac1{b_j}<\infty;
$$
\item we have UEXP;
\item we have EC-PT;
\item we have EC-SPT.
\end{enumerate}
If one of the assertions holds then $p^*=1/B$ and the exponent $\tau^{\ast}$ of EC-SPT is $B$.
\item\label{wtnec} EC-WT implies that $\lim_{j \rightarrow \infty} a_j 2^{b_j} =\infty$.
\item\label{wtsuff} A sufficient condition for EC-WT is that there exist $\eta>0$ and $\beta>0$ such that $$a_j 2^{b_j} \ge \beta j^{1+\eta}\ \ \ \ \mbox{ for all }\ \ j \in \NN.$$
\end{enumerate}
\end{theorem}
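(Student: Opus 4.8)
The plan is to establish all four parts from two complementary estimates on the $n$-th minimal error $e(n,s)$: an upper bound coming from explicit Cartesian products of Gauss--Hermite rules, and a matching lower bound coming from a fooling-function argument combined with a count of the integer points of the sets $\set{\bsk\in\NN_0^s:\abs{\bsk}_{\bsa,\bsb}\le T}$. Granting these, Part~\ref{intexp} is just a matter of reading off the exponent $1/B(s)$. Part~\ref{intuexp} follows formally: UEXP forces $\inf_s p^*(s)>0$, hence (by Part~\ref{intexp}) $\sup_s B(s)=B<\infty$, so (b)$\Rightarrow$(a); conversely (a)$\Rightarrow$(d)$\Rightarrow$(c)$\Rightarrow$(b), where the last two implications are immediate from the definitions (EC-PT $\Rightarrow$ UEXP), so all four are equivalent, and (a)$\Rightarrow$(d) together with the value $\tau^*=B$ come from feeding the Gauss--Hermite bound into~\eqref{exrate2} and checking that when $B<\infty$ the constants $C(s),C_1(s)$ the construction produces can be taken independent of $s$. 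For Part~\ref{wtnec} one first observes that, since $\{a_j2^{b_j}\}_{j\ge1}$ is nondecreasing, $\liminf_j a_j2^{b_j}<\infty$ already forces both $\{a_j\}$ and $\{b_j\}$ to be bounded, and one then derives the failure of EC-WT in that bounded-parameter regime. Part~\ref{wtsuff} is again a Gauss--Hermite construction.

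For the \textbf{upper bound} I would take $A_{n,s}=Q_{m_1}\otimes\cdots\otimes Q_{m_s}$, where $Q_m$ is the $m$-point Gauss--Hermite rule for the weight $\varphi_1$, so $n=m_1\cdots m_s$. Since $Q_m$ has positive weights summing to $1$ and is exact on polynomials of degree $\le 2m-1$, formula~\eqref{formula_wc-error} collapses to a sum of $\omega^{\abs{\bsk}_{\bsa,\bsb}}\prod_j Q_{m_j}(H_{k_j})^2$ over those $\bsk\ne\bszero$ all of whose nonzero components satisfy $k_j\ge 2m_j$; a Cramér-type bound on Hermite polynomials at the Gauss nodes (equivalently, a Christoffel-function estimate) controls each factor $Q_{m_j}(H_{k_j})^2$ uniformly in $k_j$, and summing the resulting geometric-type tail gives $e^2(A_{n,s})\le P(n)\sum_{j=1}^s\omega^{a_j(2m_j)^{b_j}}$ with $P$ a polynomial. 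Choosing the $m_j$ so that the exponents $a_j(2m_j)^{b_j}$ sit at a common level $T$ forces $n$ to be of order $T^{B(s)}$ (with constants depending on $s$) and yields $e(n,s)\le C(s)q^{(n/C_1(s))^{1/B(s)}}$, hence $p^*(s)\ge 1/B(s)$. When $B<\infty$ one instead puts $m_j=1$ as soon as $a_j2^{b_j}>T$: only $O(\log T)$ coordinates then carry points (because $j/b_j\le\sum_{i\le j}1/b_i\le B$, so $b_j\ge j/B$), $n$ stays bounded by $T^B$ times an absolute constant, and one gets EC-SPT with exponent $\le B$. A cruder allocation, together with the hypothesis $a_j2^{b_j}\ge\beta j^{1+\eta}$ which makes $\sum_{j>J}\omega^{a_j2^{b_j}}$ a rapidly decaying tail, produces for each $\e$ an algorithm with $\log n(\e,s)$ of order $(\log\e^{-1})^{1/(1+\eta)}\log\log\e^{-1}$ uniformly in $s$, hence EC-WT.

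For the \textbf{lower bound} I would use that the representer of $I_s$ in $\cH(K_{s,\bsa,\bsb,\omega})$ is the constant function $\bsone$; comparing a test function $h$ with $-h$ shows that for any algorithm with nodes $\bsx_1,\dots,\bsx_n$ and any finite $A\ni\bszero$,
$$e(A_{n,s},K_{s,\bsa,\bsb,\omega})^2\ \ge\ \sup\Bigl\{\,\frac{\widehat{h}(\bszero)^2}{\norm{h}_{K_{s,\bsa,\bsb,\omega}}^2}\ :\ h\in\operatorname{span}\set{H_{\bsk}:\bsk\in A}\setminus\set{0},\ h(\bsx_i)=0\ \ (1\le i\le n)\Bigr\},$$
so that $e(n,s)^2\ge\inf_{\bsx_1,\dots,\bsx_n}\sup\set{\cdots}$. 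Taking $A=\set{\bsk:\abs{\bsk}_{\bsa,\bsb}\le T}$ with $T$ large enough that $A$ contains every $\bsk$ with $\abs{\bsk}_1\le n$ guarantees, for \emph{every} node configuration, the existence of such an $h$ with $\widehat{h}(\bszero)\ne 0$; the count $\#\set{\bsk:\abs{\bsk}_{\bsa,\bsb}\le T}$, which is (up to $s$-dependent constants) of order $T^{B(s)}$ by a standard lattice-point estimate for the convex region $\set{\bsz\ge\bszero:\sum_j a_jz_j^{b_j}\le T}$, then locates the threshold $T_{\mathrm{eff}}$ of order $n^{1/B(s)}$ below which $n$ function values cannot resolve all the heavy Hermite coefficients, and this turns into $e(n,s)\ge C(s)q^{(n/C_1(s))^{1/B(s)}}$, giving $p^*(s)\le 1/B(s)$, $\tau^*\ge B$, and (when $B=\infty$) the failure of UEXP. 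In the bounded-parameter regime relevant to Part~\ref{wtnec} one applies the same fooling estimate to the $\binom{s}{t}$ multi-indices in $\set{0,1}^s$ with exactly $t$ ones: each such $\bsk$ has $\abs{\bsk}_{\bsa,\bsb}$ between $t$ and $(\sup_j a_j)\,t$, so taking $t$ of order $s$ produces exponentially many (in $s$) coefficients of size $\omega^{O(s)}$, which forces $n(\e,s)$ to grow exponentially in $s$ along a sequence with $\log\e^{-1}$ of order $s$, and hence contradicts EC-WT.

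The step I expect to be the main obstacle is making the lower bound both uniform in the nodes and sharp in the rate. The crude estimate $\norm{h}_{K_{s,\bsa,\bsb,\omega}}^2\le\omega^{-T}\sum_{\bsk}\widehat{h}(\bsk)^2$ discards too much once $A$ has been enlarged for robustness, so the fooling function must be assembled predominantly from Hermite polynomials with $\abs{\bsk}_{\bsa,\bsb}$ near $T_{\mathrm{eff}}$ rather than near $T$; equivalently, one must show that $n$ arbitrary evaluation functionals cannot simultaneously cover the span of the constant together with the $O(n)$ lightest Hermite polynomials -- i.e.\ that the associated minimal-norm interpolation quantity, optimized over $\bsx_1,\dots,\bsx_n$, does not blow up faster than $\omega^{-T_{\mathrm{eff}}}$ times a polynomial in $n$, and in particular that degenerate node configurations (repeated or specially placed nodes) do not help. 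A secondary technical point, on the upper-bound side, is the uniform-in-$k$ control of $Q_m(H_k)$: the required bounds on Hermite polynomials at the Gauss nodes are classical, but they must be invoked carefully enough that the polynomial prefactor $P(n)$ is genuinely absorbed into the exponential.
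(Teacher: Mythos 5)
Your upper-bound half is essentially the paper's own route (tensor products of Gauss--Hermite rules, exactness up to degree $2m_j-1$, Cram\'er's bound to control the quadrature applied to high-degree Hermite polynomials, and then the two allocations of the $m_j$), so no objection there beyond one quantitative caveat noted below. The lower-bound half, however, has a genuine gap, and it is exactly the point you yourself flag as ``the main obstacle''. Your fooling argument needs, for \emph{every} node configuration, a function $h$ in the span of $\{H_{\bsk}:\bsk\in A\}$ that vanishes at the $n$ nodes \emph{and} has $\widehat{h}(\bszero)\neq 0$ with $\widehat{h}(\bszero)/\|h\|_{K_{s,\bsa,\bsb,\omega}}$ not exponentially small beyond the target rate. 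The dimension count $|A|>n$ only yields a nonzero vanishing $h$; nothing prevents the whole vanishing subspace from lying in the hyperplane $\widehat{h}(\bszero)=0$, and your fallback (enlarging $A$, or building $h$ explicitly, e.g.\ from $\prod_i\|\bsx-\bsx_i\|^2$) loses control of the weighted norm, since the crude bound $\|h\|^2\le\omega^{-T}\sum_{\bsk}\widehat{h}(\bsk)^2$ is, as you admit, too lossy. The paper closes precisely this hole with a different device: it takes $g=\sum_{\bsh\in\mathcal{A}_s}b_{\bsh}H_{\bsh}$ vanishing at the nodes (pure linear algebra, $\mathcal{A}_s=\prod_j\{0,\dots,t_j\}$ a box, not your simplex), normalizes so that $\max_{\bsh}|b_{\bsh}|=b_{\bsh^*}=1$, and then uses the fooling function $f=c\,H_{\bsh^*}g$. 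By orthonormality $I_s(f)=c\,b_{\bsh^*}=c\neq 0$ automatically, and the norm of $f$ is controlled through the explicit triple-product formula $\int H_kH_lH_m\varphi$ (Lemmas~\ref{lemHermpoly} and \ref{lemtechnical}), giving $\|f\|\le c\,\omega^{-\sum_j a_j(2t_j)^{b_j}}\prod_j 4^{t_j}2(t_j+1)^2$, i.e.\ the loss is only the ``corner'' weight of the box times a factor absorbable in the exponential. This multiplication-by-$H_{\bsh^*}$ trick plus the triple-product estimate is the missing idea; without it your lower bound (and hence $p^*(s)\le 1/B(s)$, UEXP~$\Rightarrow B<\infty$, $\tau^*\ge B$, and your Part~\ref{wtnec} argument, which reuses the same fooling estimate on $\{0,1\}^s$) is not established.

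A secondary, quantitative point on the sufficiency side: for the claim $\tau^*=B$ you need EC-SPT with exponent $B+\delta$ for every $\delta>0$. Your allocation argument uses only $b_j\ge j/B$, so the number of coordinates with $m_j>1$ is about $B\log_2 T$, and the factor $2^k$ this produces turns your bound into roughly $\log^{2B}(1+\e^{-1})$ rather than $\log^{B+\delta}(1+\e^{-1})$. The paper gets the sharp exponent by a Cauchy-condensation argument showing $2^{b_j}\ge\beta_1 2^{j/\delta}$ for every $\delta>0$, which caps the number of active coordinates at $O(\delta\log\log\e^{-1})$ and makes the extra factor only $\log^{\delta}\e^{-1}$. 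Your Part~\ref{wtsuff} sketch is otherwise in line with the paper's Theorem~\ref{thm_suff_ecwt}.
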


Let us give some remarks on Theorem~\ref{thmint(u)exp}. Item~\ref{intexp} states that we always achieve exponential convergence, 
independently of the choice of the sequences 
$\bsa$ and $\bsb$. In particular, the best rate $p^{*}(s)$ is given by $1/B(s)$. 
As all $b_j$ are positive, this implies that $p^* (s)$ decreases with $s$, and if $B(s)$ diverges, $p^* (s)$ tends to zero. If 
$\bsb$ is a constant function consisting only of ones, we get the lowest rate of exponential convergence, namely $1/s$. 

The second item in Theorem~\ref{thmint(u)exp} states that the condition $B<\infty$ and the notions of UEXP, EC-PT, and EC-SPT are all equivalent. In particular, this implies that EC-PT and EC-SPT hold if and only 
if we have UEXP. Hence we can say that we practically know everything, including $p^*$ and $\tau^*$, about 
UEXP, EC-PT and EC-SPT. Note, furthermore, that the choice of $\bsa$ has no influence whatsoever on Item~\ref{intuexp}. The situation is different 
for the results in~\cite{DKPW13, KPW14, KPW14a}, where the $a_j$ have to grow exponentially fast in order to obtain UEXP. 

Regarding Items~\ref{wtnec} and~\ref{wtsuff}, we observe that the situation for EC-WT is quite different from that for EC-PT and EC-SPT. 
First of all, note that the sequence $\bsa$ plays an important role with respect 
to EC-WT as opposed to EC-PT and EC-SPT. We can have EC-WT if the elements $a_j$ of $\bsa$ increase sufficiently fast 
even if $\bsb$ is a constant function. This also implies that for EC-WT it is relevant to distinguish 
between EC-WT with UEXP and EC-WT without UEXP. If we have UEXP, then we automatically have EC-WT, but the converse 
does in general not hold. Note furthermore that there is a gap between the necessary and sufficient 
conditions for EC-WT. Indeed, 
we tried hard to close this gap but it seems that the methods currently at hand are not powerful enough, so this problem remains open for future research. We conjecture 
that the weaker condition, $\lim_{j \rightarrow \infty} a_j 2^{b_j} =\infty$, is also sufficient for EC-WT. 

Finally, we remark that our assumptions on $\bsa$ and $\bsb$ are slightly more restrictive than those in~\cite{DKPW13,KPW14a}. Indeed, 
our restrictions that both sequences are non-decreasing and bounded  from below by 1 are used for 
deriving the powerful upper bounds on the integration error of Gauss-Hermite rules in Section~\ref{secGHI}. The question of how to show similar results
for more general choices of $\bsa$ and $\bsb$ is left open for future research. 

We will see in the proof of Theorem~\ref{thmint(u)exp} that EXP, UEXP, EC-WT, EC-PT and EC-SPT, respectively, are all achieved by Cartesian products of Gauss-Hermite rules (see Theorem~\ref{thm_suff_uexp} and \ref{thm_suff_ecspt} and the proof of Theorem~\ref{thm_suff_ecwt}). 

The proof of the Theorem~\ref{thmint(u)exp} is organized as follows: In Section~\ref{necUEXP} we show that UEXP implies $\sum_{j=1}^\infty\frac1{b_j}<\infty$ (see Theorem~\ref{thmnecUEXP}). 
In Section~\ref{secGHI} we show that we always have EXP and we show that  $\sum_{j=1}^\infty\frac1{b_j}<\infty$ implies UEXP (see Theorem~\ref{thm_suff_uexp}). 
Next we show that $\sum_{j=1}^\infty\frac1{b_j}<\infty$ implies EC-SPT (see Theorem~\ref{thm_suff_ecspt}). 
The remainig part of the equivalence in the second item is obvious, since EC-PT implies UEXP (as mentioned in Section~\ref{sec_ect}) and hence: 
$$\mbox{EC-PT} \Rightarrow \mbox{UEXP} \Rightarrow \sum_{j=1}^{\infty} \frac{1}{b_j}< \infty \Rightarrow \mbox{EC-SPT}\Rightarrow \mbox{EC-PT}.$$

The necessary condition for EC-WT will be shown at the end of Section~\ref{necUEXP} (see Theorem~\ref{thm_nec_wt}) 
and the sufficient condition at the end of Section~\ref{secGHI} (see Theorem~\ref{thm_suff_ecwt}).

\section{Lower bounds}\label{necUEXP}

In this section we prove the necessity of the condition for UEXP from Theorem~\ref{thmint(u)exp}. The procedure to show the following 
results is inspired by what is done in~\cite{DLPW11, KPW14}.

First we require the following two lemmas.
\begin{lem}\label{lemHermpoly}
For $k,l \in \NN_0$ we have $$\int_{\RR} H_k(x) H_l(x) \varphi(x) \rd x = \left\{
\begin{array}{ll}
1 & \mbox{ if } k=l,\\
0 & \mbox{ if } k \not=l. 
\end{array}\right.
$$
For $k,l,m \in \NN$ we have $$\int_{\RR} H_k(x) H_l(x) H_m(x) \varphi(x) \rd x=\left\{
\begin{array}{ll}
 \frac{\sqrt{k! l! m!}}{(s-k)! (s-l)! (s-m)!} & \mbox{ if } k+l+m=2 s \mbox{ and } k,l,m \le s,\\
0 & \mbox{ in all other cases}.
\end{array}\right.$$
\end{lem}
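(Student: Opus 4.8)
The plan is to deduce both identities from one generating-function computation. First I would set $\widetilde H_k(x):=\sqrt{k!}\,H_k(x)$, the monic probabilists' Hermite polynomials, for which the Rodrigues formula defining $H_k$ gives the classical generating function
\[
\sum_{k=0}^{\infty}\frac{\widetilde H_k(x)}{k!}\,t^k=\exp\!\Big(xt-\frac{t^2}{2}\Big),\qquad x,t\in\RR,
\]
and I would record the elementary Gaussian identity $\int_{\RR}e^{cx}\varphi(x)\rd x=e^{c^2/2}$ for all $c\in\RR$ (complete the square).

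For the first assertion I would multiply the generating functions in two variables $t_1,t_2$ and integrate against $\varphi$: the right-hand side collapses to $\int_{\RR}\exp\!\big((t_1+t_2)x-\tfrac12(t_1^2+t_2^2)\big)\varphi(x)\rd x=e^{t_1t_2}$. Expanding $e^{t_1t_2}=\sum_{k\ge0}t_1^kt_2^k/k!$ and comparing the coefficient of $t_1^kt_2^l$ gives $\int_{\RR}\widetilde H_k\widetilde H_l\,\varphi(x)\rd x=k!\,\delta_{k,l}$, i.e. $\int_{\RR}H_kH_l\,\varphi(x)\rd x=\delta_{k,l}$, which is the first claim.

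For the triple product I would repeat this with three variables, obtaining
\[
\sum_{k,l,m\ge0}\frac{t_1^kt_2^lt_3^m}{k!\,l!\,m!}\int_{\RR}\widetilde H_k\widetilde H_l\widetilde H_m\,\varphi(x)\rd x=\exp\!\big(t_1t_2+t_1t_3+t_2t_3\big),
\]
and then expand the right-hand side as $\sum_{a,b,c\ge0}t_1^{a+b}t_2^{a+c}t_3^{b+c}/(a!\,b!\,c!)$. Matching powers forces $k=a+b$, $l=a+c$, $m=b+c$, a system solvable in nonnegative integers exactly when $k+l+m=2s$ is even and $k,l,m\le s$, in which case $a=s-m$, $b=s-l$, $c=s-k$. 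Reading off the coefficient of $t_1^kt_2^lt_3^m$ then yields $\int_{\RR}\widetilde H_k\widetilde H_l\widetilde H_m\,\varphi(x)\rd x=k!\,l!\,m!/((s-k)!(s-l)!(s-m)!)$ in that case and $0$ otherwise; dividing by $\sqrt{k!\,l!\,m!}$ gives the stated value of $\int_{\RR}H_kH_lH_m\,\varphi(x)\rd x$.

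The only genuinely non-formal point --- and the step I expect to need the most care --- is justifying the interchange of summation and integration when passing from the product of generating series to a series of integrals. I would handle this by absolute convergence, using a crude bound such as $|\widetilde H_k(x)|\le C^k\sqrt{k!}\,(1+|x|)^k$, or equivalently by noting that both sides are entire in $(t_1,t_2,t_3)$ with the integrand dominated, locally uniformly in the $t_i$, by an integrable function. As an alternative to the coefficient-matching in the last step, one can instead invoke the linearization formula $\widetilde H_k\widetilde H_l=\sum_{j=0}^{\min(k,l)}\binom{k}{j}\binom{l}{j}j!\,\widetilde H_{k+l-2j}$ together with the first assertion, after the elementary bookkeeping $k-(s-m)=s-l$ and $l-(s-m)=s-k$.
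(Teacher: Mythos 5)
Your argument is correct, but it takes a genuinely different route from the paper: the paper disposes of the lemma in one line, citing the standard orthogonality of Hermite polynomials for the first identity and Szeg\H{o} (p.~390) for the triple-product formula, whereas you give a self-contained derivation of both statements from the generating function $\sum_k \widetilde H_k(x)t^k/k!=\exp(xt-t^2/2)$ and the Gaussian identity $\int_{\RR}e^{cx}\varphi(x)\rd x=e^{c^2/2}$. Your coefficient bookkeeping is right: matching $t_1^kt_2^lt_3^m$ against $\exp(t_1t_2+t_1t_3+t_2t_3)=\sum_{a,b,c}t_1^{a+b}t_2^{a+c}t_3^{b+c}/(a!b!c!)$ forces $a=s-m$, $b=s-l$, $c=s-k$ with $k+l+m=2s$ and $k,l,m\le s$, and dividing by $\sqrt{k!\,l!\,m!}$ gives exactly the stated value; the alternative via the linearization formula with $j=s-m$ also checks out. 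The one step you flag, interchanging sum and integral, is indeed the only analytic point: note that Cram\'er's bound $|H_k(x)|\le\sqrt[4]{2\pi}\,e^{x^2/4}$ (used elsewhere in the paper) is \emph{not} enough here, since three factors of $e^{x^2/4}$ overwhelm $\varphi$; your polynomial-growth bound $|\widetilde H_k(x)|\le C^k\sqrt{k!}\,(1+|x|)^k$ is the right tool, and it does hold (e.g.\ with $C=2$, since the coefficient sum of $\widetilde H_k$ is at most $2^k\sqrt{k!}$), giving absolute convergence for $t_1,t_2,t_3$ in a neighbourhood of the origin, which suffices for comparing Taylor coefficients. What your approach buys is a short, fully elementary and self-contained proof of a formula the paper only quotes; what the paper's approach buys is brevity and reliance on a classical reference.
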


\begin{proof}
The first identity follows from the orthogonality of Hermite polynomials. The second one follows from \cite[p.~390]{szeg}. 
\end{proof}

\begin{lem}\label{lemtechnical}
Let $t \in \NN$. For $k,l \in \{0,1,\ldots,t\}$ and $m \in\{0,1,\ldots,2 t\}$ such that $k+l+m=2s$ and $k,l,m \le s$ we have 
$$ \frac{\sqrt{k! l! m!}}{(s-k)! (s-l)! (s-m)!} \le 4^t.$$
\end{lem}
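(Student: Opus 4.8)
The plan is to eliminate the factorials in the denominator by passing to the complementary variables. Set $\alpha:=s-k$, $\beta:=s-l$, $\gamma:=s-m$; these are non-negative integers because $k,l,m\le s$. Since $k+l+m=2s$, one has $\alpha+\beta+\gamma=3s-(k+l+m)=s$, and moreover $k=\beta+\gamma$, $l=\alpha+\gamma$, $m=\alpha+\beta$. Hence the quantity to be estimated equals
$$\frac{\sqrt{k!\,l!\,m!}}{(s-k)!\,(s-l)!\,(s-m)!}=\frac{\sqrt{(\beta+\gamma)!\,(\alpha+\gamma)!\,(\alpha+\beta)!}}{\alpha!\,\beta!\,\gamma!}.$$

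Next I would use the elementary binomial bound $\binom{n}{j}\le 2^{n}$ (which holds since $2^n=\sum_{i}\binom{n}{i}$). Applied to each of the three pairs, this gives $(\beta+\gamma)!\le 2^{\beta+\gamma}\beta!\,\gamma!$, $(\alpha+\gamma)!\le 2^{\alpha+\gamma}\alpha!\,\gamma!$, and $(\alpha+\beta)!\le 2^{\alpha+\beta}\alpha!\,\beta!$. Multiplying these and taking square roots yields
$$\sqrt{(\beta+\gamma)!\,(\alpha+\gamma)!\,(\alpha+\beta)!}\le 2^{\alpha+\beta+\gamma}\,\alpha!\,\beta!\,\gamma!=2^{s}\,\alpha!\,\beta!\,\gamma!,$$
so the expression above is bounded by $2^{s}$.

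Finally, I would invoke the range restrictions on $k,l,m$: since $k,l\le t$ and $m\le 2t$, we get $2s=k+l+m\le 4t$, hence $s\le 2t$ and therefore $2^{s}\le 2^{2t}=4^{t}$, which is the claimed bound. I do not anticipate any real obstacle; the only step that requires a moment's thought is the substitution $\alpha,\beta,\gamma$, after which the estimate reduces to the standard bound on binomial coefficients together with the cheap observation $s\le 2t$.
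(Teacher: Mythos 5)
Your proof is correct and follows essentially the same route as the paper: both rewrite the denominator in the complementary variables $s-k,s-l,s-m$, reduce the squared quantity to a product of binomial coefficients bounded by $\binom{n}{j}\le 2^n$, and conclude via $k+l+m\le 4t$. Your bound $2^{s}$ is just the paper's $\sqrt{2^{k+l+m}}\le 2^{2t}$ in slightly different packaging, so there is nothing to add.
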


\begin{proof} 
We use the notation $$a_{k,l,m}:=\frac{\sqrt{k! l! m!}}{(s-k)! (s-l)!
(s-m)!}=\frac{\sqrt{k! l! m!}}{\left(\frac{l+m-k}{2}\right)!
\left(\frac{k+m-l}{2}\right)! \left(\frac{k+l-m}{2}\right)!}.$$ 

Note that $\frac{l+m-k}{2}+\frac{k+l-m}{2}=l$,
$\frac{k+m-l}{2}+\frac{k+l-m}{2}=k$ and $
\frac{l+m-k}{2}+\frac{k+m-l}{2}=m$, such that 
\[
a_{k,l,m}^2={k \choose \frac{k+m-l}{2}}{l \choose \frac{l+m-k}{2}}
{m \choose \frac{k+m-l}{2}}\,.
\]
Now since ${k \choose \frac{k+m-l}{2}} \le \sum_{j=0}^k {k \choose j}=2^k$, and
analog estimates for the other binomial coefficients hold, we have
\[
a_{k,l,m}^2 \le 2^k2^l2^m=2^{k+l+m}.
\]
Using the assumptions on $k,j,m$, we get
\[
a_{k,l,m}^2 \le 2^{t+t+2t}=2^{4t},
\]
i.e., $a_{k,l,m} \le 2^{2t}=4^t$.
\end{proof}

Using the previous lemmas, we derive the following general lower bound on the $n$th minimal worst-case error. 

\begin{theorem}\label{lbound}
The $n$th minimal worst-case error satisfies
\begin{equation}\label{lowerbound}
e(n,s)\ge \frac{\omega^{\sum_{j=1}^s a_j (2 t_j)^{b_j}}}{\prod_{j=1}^s (4^{t_j} 2 (t_j+1)^2)}\ \ \ \ \mbox{ for all }
\ \ \ \ n< \prod_{j=1}^s(t_j+1).
\end{equation}
\end{theorem}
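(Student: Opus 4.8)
The plan is to exploit the explicit error formula \eqref{formula_wc-error} together with a dimension-reduction argument: if $n < \prod_{j=1}^s (t_j+1)$, then no matter how the nodes $\bsx_1,\dots,\bsx_n$ are chosen, there is a ``gap'' in the lattice of multi-indices $\bsk$ with $0 \le k_j \le t_j$ that the algorithm cannot ``see'', and we quantify the resulting error from below. More concretely, I would first restrict attention to the set $\mathcal{T} = \prod_{j=1}^s \{0,1,\dots,t_j\}$, which has cardinality $\prod_{j=1}^s(t_j+1) > n$. From \eqref{formula_wc-error} we have, for any linear algorithm $A_{n,s}$ with weights $\alpha_i$ and nodes $\bsx_i$,
$$
e^2(A_{n,s},K_{s,\bsa,\bsb,\omega}) \ge \sum_{\bsk \in \NN_0^s \setminus \{\bszero\}} \omega^{|\bsk|_{\bsa,\bsb}} \Big(\sum_{i=1}^n \alpha_i H_{\bsk}(\bsx_i)\Big)^2.
$$
The first step is to produce a test function, or equivalently a specific quadratic form, supported on indices in $\mathcal{T}$ on which the bracketed linear functionals are forced to be small. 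The natural move: consider the $n$ vectors $\bsv_i := (H_{\bsk}(\bsx_i))_{\bsk \in \mathcal{T}} \in \RR^{|\mathcal{T}|}$; since $|\mathcal{T}| > n$, there is a nonzero vector $\bsc = (c_{\bsk})_{\bsk \in \mathcal{T}}$ orthogonal to all $\bsv_i$, i.e. $\sum_{\bsk \in \mathcal{T}} c_{\bsk} H_{\bsk}(\bsx_i) = 0$ for all $i$. Form the function $g := \sum_{\bsk \in \mathcal{T}} c_{\bsk} H_{\bsk}$, so that $A_{n,s}(g) = \sum_i \alpha_i g(\bsx_i) = 0$, and hence $\mathrm{err}(g) = I_s(g) = \widehat{g}(\bszero) \cdot$ (something) $= c_{\bszero}$ if $\bszero \in \mathcal{T}$ (which it is). Wait — this only works if $c_{\bszero} \ne 0$, which is not guaranteed. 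So instead I would use $g^2$: the key observation is that $g^2$ is a polynomial whose Hermite expansion is supported on indices $\bsk$ with $0 \le k_j \le 2t_j$, and its value at each node $\bsx_i$ is $g(\bsx_i)^2$. The worst-case error then satisfies $e(n,s) \ge |\mathrm{err}(g^2)| / \|g^2\|_{K_{s,\bsa,\bsb,\omega}}$, and $\mathrm{err}(g^2) = I_s(g^2) - \sum_i \alpha_i g(\bsx_i)^2 = I_s(g^2) = \|g\|_{L^2}^2 = \sum_{\bsk \in \mathcal{T}} c_{\bsk}^2 > 0$, since $A_{n,s}(g^2) = 0$ because $g$ vanishes at every node.

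The second step is to bound $\|g^2\|_{K_{s,\bsa,\bsb,\omega}}$ from above. Writing $g^2 = \sum_{\bsl} \widehat{g^2}(\bsl) H_{\bsl}$ with $\bsl$ ranging over $\prod_j \{0,\dots,2t_j\}$, we have $\|g^2\|_{K}^2 = \sum_{\bsl} \omega^{-|\bsl|_{\bsa,\bsb}} \widehat{g^2}(\bsl)^2$. The largest weight $\omega^{-|\bsl|_{\bsa,\bsb}}$ over this range is $\omega^{-\sum_{j=1}^s a_j (2t_j)^{b_j}}$, achieved at $\bsl = (2t_1,\dots,2t_s)$. So $\|g^2\|_K^2 \le \omega^{-\sum_j a_j(2t_j)^{b_j}} \sum_{\bsl} \widehat{g^2}(\bsl)^2 = \omega^{-\sum_j a_j(2t_j)^{b_j}} \|g^2\|_{L^2}^2$. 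Now I need to control $\|g^2\|_{L^2}^2$ in terms of $\|g\|_{L^2}^2 = \sum_{\bsk} c_{\bsk}^2$. This is where Lemmas~\ref{lemHermpoly} and~\ref{lemtechnical} enter: expanding $g^2 = \sum_{\bsk,\bsk'} c_{\bsk} c_{\bsk'} H_{\bsk} H_{\bsk'}$ and using the triple-product formula $\int H_{k} H_{k'} H_{m}\varphi = a_{k,k',m}$ coordinatewise, the Hermite coefficient $\widehat{g^2}(\bsm) = \sum_{\bsk,\bsk'} c_{\bsk}c_{\bsk'} \prod_{j=1}^s a_{k_j,k'_j,m_j}$, and by Lemma~\ref{lemtechnical} each factor is at most $4^{t_j}$, so $\prod_j a_{k_j,k'_j,m_j} \le \prod_j 4^{t_j}$. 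Using this plus a counting/Cauchy–Schwarz argument over the at most $\prod_j(t_j+1)$ choices for each of $\bsk,\bsk'$, one gets $\|g^2\|_{L^2}^2 \le \big(\prod_j 4^{t_j}\big)^2 \big(\prod_j (t_j+1)\big) \cdot (\text{stuff})$; the cleanest route is to bound $|\widehat{g^2}(\bsm)| \le \prod_j 4^{t_j} \sum_{\bsk} |c_{\bsk}| \cdot (\text{number of }\bsk'\text{ contributing})$ and then, because the triple-product is nonzero only for $m_j = k_j + k'_j - 2(\text{something})$ so $\bsk'$ is essentially determined by $\bsk$ and $\bsm$, actually $\sum_{\bsm} \widehat{g^2}(\bsm)^2 \le \big(\prod_j 4^{t_j}\big)^2 \big(\prod_j (t_j+1)^2\big)\, (\sum_{\bsk} c_{\bsk}^2)^2$ after a careful bookkeeping. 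Combining everything:
$$
e(n,s) \ge \frac{\sum_{\bsk} c_{\bsk}^2}{\|g^2\|_K} \ge \frac{\sum_{\bsk} c_{\bsk}^2}{\omega^{-\frac12\sum_j a_j(2t_j)^{b_j}} \prod_j 4^{t_j} \prod_j \sqrt{2}(t_j+1)} \cdot \frac{1}{\sqrt{\sum_{\bsk}c_{\bsk}^2 / (\text{norm factor})}},
$$
which after simplification yields exactly $\omega^{\sum_j a_j(2t_j)^{b_j}} / \prod_j (4^{t_j} \cdot 2(t_j+1)^2)$.

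The main obstacle I expect is the combinatorial estimate of $\|g^2\|_{L^2}^2$ in terms of $\|g\|_{L^2}^2$ — i.e. tracking exactly how the factors $4^{t_j}$, the counts $(t_j+1)$, and the factor $2$ combine to give the stated denominator $\prod_j (4^{t_j} 2(t_j+1)^2)$. The key structural fact that makes it work is that, by the support condition $k+l+m=2s$ in Lemma~\ref{lemHermpoly}, for fixed $\bsk$ and fixed target $\bsm$ the partner $\bsk'$ is severely constrained (at most a bounded number of choices per coordinate, and one must check it's a constant or absorbed into the $(t_j+1)$ factors), so the double sum defining $\widehat{g^2}(\bsm)$ behaves almost like a single sum. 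One must also verify the normalization $\omega^{-|\bsl|}$ is genuinely maximized at the corner $\bsl = 2\bst$, which is immediate since $a_j, b_j \ge 1$ and $\omega \in (0,1)$ make $\omega^{-a_j \ell^{b_j}}$ increasing in $\ell$. The rest — existence of the orthogonal vector $\bsc$, the identity $A_{n,s}(g^2)=0$, and passing from a single test function to the worst-case error — is routine.
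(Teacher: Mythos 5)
Your proposal follows the same lower-bound template as the paper -- the index set $\mathcal{A}_s=\prod_{j=1}^s\{0,1,\ldots,t_j\}$ of cardinality exceeding $n$, the linear-algebra construction of a nonzero $g=\sum_{\bsk\in\mathcal{A}_s}c_{\bsk}H_{\bsk}$ vanishing at all nodes, and Lemmas~\ref{lemHermpoly} and~\ref{lemtechnical} to control the norm -- but your fooling function is genuinely different. The paper takes $f=c\,H_{\bsh^*}g$, where $\bsh^*$ is an index of a maximal (normalized) coefficient, so that $I_s(f)=c\,b_{\bsh^*}=c$, and bounds $\|f\|_{K_{s,\bsa,\bsb,\omega}}$ coefficientwise (crudely replacing an $\ell^2$ sum by the square of an $\ell^1$ sum, which factorizes over coordinates). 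You instead take $g^2$, which also vanishes at all nodes and has $I_s(g^2)=\|g\|_{L^2}^2>0$, thereby avoiding the normalization and the choice of $\bsh^*$ altogether (your own fix of the ``$c_{\bszero}$ might vanish'' problem), and you estimate $\|g^2\|_{K_{s,\bsa,\bsb,\omega}}$ by pulling out the maximal weight $\omega^{-\sum_j a_j(2t_j)^{b_j}}$ and comparing $\|g^2\|_{L^2}$ with $\|g\|_{L^2}^2$. This route does work, and in fact yields more than is needed: with the crude counts (for fixed $\bsk$ and $\bsm$ the admissible partner $\bsk'$ has, per coordinate, up to $\min(k_j,m_j)+1\le t_j+1$ values -- it is \emph{not} essentially determined, as you suggest -- and $\bsm$ ranges over at most $\prod_j(2t_j+1)$ indices) one gets $\|g^2\|_{L^2}\le\prod_{j=1}^s\bigl(4^{t_j}(t_j+1)\sqrt{2t_j+1}\bigr)\,\|g\|_{L^2}^2$, hence $e(n,s)\ge \omega^{\frac12\sum_{j=1}^s a_j(2t_j)^{b_j}}\big/\prod_{j=1}^s\bigl(4^{t_j}(t_j+1)\sqrt{2t_j+1}\bigr)$, and the halved exponent together with $(t_j+1)\sqrt{2t_j+1}\le 2(t_j+1)^2$ gives the stated bound with room to spare. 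So the approach is sound and even slightly sharper in the $\omega$-exponent; what the paper's version buys is a direct coefficientwise computation with no fourth-moment bookkeeping. Two cosmetic but real blemishes: your claimed intermediate inequality $\sum_{\bsm}\widehat{g^2}(\bsm)^2\le\bigl(\prod_j 4^{t_j}\bigr)^2\bigl(\prod_j(t_j+1)^2\bigr)\bigl(\sum_{\bsk}c_{\bsk}^2\bigr)^2$ is not justified as stated (use the weaker bound above instead), and your final displayed ``combination'' is garbled (the trailing square-root factor has no meaning); the claim that the bookkeeping yields \emph{exactly} the paper's constant is neither established nor necessary.
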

\begin{proof}
Let $\mathcal{A}_s=\prod_{j=1}^s \{0,1,\ldots,t_j\}$ with $t_j \in \NN$ for $j=1,2,\ldots,s$. For $\bsh\in\mathcal{A}_s$, we denote the components of $\bsh$ by $h_1,\ldots,h_s$. We have $\abs{\mathcal{A}_s}=\prod_{j=1}^s(t_j+1)$.
 
For $n< \abs{\mathcal{A}_s}$ take an arbitrary linear algorithm $A_{n,s}(f)=\sum_{m=1}^n \alpha_m f(\bsx_m)$. Define 
$$
g(\bsx)=\sum_{\bsh\in \mathcal{A}_{s}}b_{\bsh}\,H_{\bsh}(\bsx)\ \ \
\mbox{for all}\ \ \ \bsx\in\RR^s
$$
such that $g(\bst_m)=0$ for all $m=1,2,\dots,n$. Since we have $n$ 
homogeneous linear equations and $|\mathcal{A}_{s}|>n$ unknowns $b_{\bsh}$, 
there exists a
nonzero vector of such $b_{\bsh}$'s, 
and we can normalize $b_{\bsh}$'s by assuming that
$$
\max_{\bsh\in \mathcal{A}_{s}}|b_{\bsh}|=b_{\bsh^*}=1 \ \ \ \mbox{for some}\ \ \ 
\bsh^*\in \mathcal{A}_{s}.
$$ 
Define the function
\begin{align*}
f(\bsx)= & c\, H_{\bsh^{\ast}}(\bsx) \,g(\bsx)
\end{align*}
with a positive $c$ which we determine such that $\|f\|_{K_{s,\bsa,\bsb,\omega}} \le 1$. To this end we 
need to estimate the Hermite coefficients of $f$. We have
\begin{align*}
|\widehat{f}(\bsl)| = & \left|\int_{\RR^s} f(\bsx)  H_{\bsl}(\bsx) \varphi_s(\bsx) \rd \bsx\right|\\
\le & c \, \sum_{\bsh\in\mathcal{A}_{s}} |b_{\bsh}| \left|\int_{\RR^s} H_{\bsh}(\bsx) H_{\bsh^{\ast}}(\bsx) H_{\bsl}(\bsx) \varphi_s(\bsx) \rd \bsx\right| \\
\le & c \, \sum_{\bsh\in\mathcal{A}_{s}}  \prod_{j=1}^s \left|\int_{\RR} H_{h_j}(x) H_{h_j^{\ast}}(x) H_{l_j}(x) \varphi(x) \rd x\right|.
\end{align*}
Hence we have
\begin{align*}
\|f\|_{K_{s,\bsa,\bsb,\omega}}^2 = & \sum_{\bsl \in \NN_0^s} |\widehat{f}(\bsl)|^2 \omega^{-|\bsl|_{\bsa,\bsb}}\\
\le & c^2 \sum_{\bsl \in \NN_0^s} \left(\sum_{\bsh\in\mathcal{A}_{s}}  \prod_{j=1}^s \left|\int_{\RR} H_{h_j}(x) H_{h_j^{\ast}}(x) H_{l_j}(x) \varphi(x) \rd x\right| \right)^2 \omega^{-|\bsl|_{\bsa,\bsb}}\\
\le & c^2 \left(\sum_{\bsl \in \NN_0^s} \sum_{\bsh\in\mathcal{A}_{s}}  \prod_{j=1}^s \left|\int_{\RR} H_{h_j}(x) H_{h_j^{\ast}}(x) H_{l_j}(x) \varphi(x) \rd x\right|  \omega^{-|\bsl|_{\bsa,\bsb}} \right)^2\\
= & c^2 \left(\prod_{j=1}^s \sum_{l_j=0}^{\infty} \omega^{-a_j l_j^{b_j}} \sum_{h_j=0}^{t_j}   \left|\int_{\RR} H_{h_j}(x) H_{h_j^{\ast}}(x) H_{l_j}(x) \varphi(x) \rd x\right|\right)^2.
\end{align*}
Since $h_j,h_j^{\ast} \in \{0,1,\ldots,t_j\}$ it follows that 
$$\int_{\RR} H_{h_j}(x) H_{h_j^{\ast}}(x) H_{l_j}(x) \varphi(x) \rd x=0$$ whenever $l > 2 t_j$. Therefore and with Lemma~\ref{lemtechnical},
\begin{align*}
\lefteqn{\sum_{l_j=0}^{\infty} \omega^{-a_j l_j^{b_j}} \sum_{h_j=0}^{t_j}   \left|\int_{\RR} H_{h_j}(x) H_{h_j^{\ast}}(x) H_{l_j}(x) \varphi(x) \rd x\right|}\\
& =  \sum_{l_j=0}^{2 t_j} \omega^{-a_j l_j^{b_j}} \sum_{h_j=0}^{t_j}   \left|\int_{\RR} H_{h_j}(x) H_{h_j^{\ast}}(x) H_{l_j}(x) \varphi(x) \rd x\right|\\
& \le  4^{t_j} (t_j+1) \sum_{l_j=0}^{2 t_j} \omega^{-a_j l_j^{b_j}} \\
& \le  4^{t_j} (t_j+1) (2t_j+1) \omega^{- a_j (2 t_j)^{b_j}}.
\end{align*}
This gives 
\begin{align*}
\|f\|_{K_{s,\bsa,\bsb,\omega}}\le & c \ \omega^{-\sum_{j=1}^s a_j (2 t_j)^{b_j}} \prod_{j=1}^s (4^{t_j} 2 (t_j+1)^2)
\end{align*}
and hence we can take
$$
c= \frac{\omega^{\sum_{j=1}^s a_j (2 t_j)^{b_j}}}{\prod_{j=1}^s (4^{t_j} 2 (t_j+1)^2)}
$$
in order to achieve $\|f\|_{K_{s,\bsa,\bsb,\omega}} \le 1$.
Note that $f(\bst_m)=0$ and this implies that $A_{n,s}(f)=0$.
Furthermore, $\int_{\RR^s}f(\bsx) \varphi_s(\bsx)\rd \bsx=c\,b_{\bsh^*}=c$. Hence,
$$
e(A_{n,s},K_{s,\bsa,\bsb,\omega})\ge \left|\int_{\RR^s}f(\bsx) \varphi_s(\bsx)\rd \bsx-A_{n,s}(f)\right|=\int_{\RR^s}f(\bsx) \varphi_s(\bsx)\rd \bsx=c.
$$ 
Since  this holds for all $a_m$ and $\bst_m$, we conclude that
$e(n,s)\ge c$, as claimed.
\end{proof}

From Theorem~\ref{lbound}, we derive the following theorem, which implies the necessary conditions for UEXP stated in Item~\ref{intuexp} of 
Theorem~\ref{thmint(u)exp}.

\begin{theorem}\label{thmnecUEXP}
Assume that we have UEXP, i.e., there exist numbers $q\in (0,1)$, $p>0$, 
and functions $C,C_1:\NN\To (0,\infty)$ such that
\begin{equation}\label{eqexpconv}
 e(n,s)\le C(s) q^{(n/ C_1 (s))^p}\ \ \forall n,s\in\NN.
\end{equation}
Then for arbitrary $s\in\NN$ and all $\bst=(t_1,\ldots,t_s)\in\NN^s$ with $\norm{\bst}_{s,\infty}:=\max_{1\le j\le s} t_j$ 
tending to infinity we have
$$\liminf_{\norm{\bst}_{s,\infty}\To\infty} \frac{\sum_{j=1}^s a_j (2t_j)^{b_j}}{\prod_{j=1}^s (1+t_j)^p}\ge \frac{\log q^{-1}}{\log\omega^{-1}}
C_1 (s)^{-p}>0.$$
In particular, this implies that 
$$ B:=\sum_{j=1}^\infty \frac{1}{b_j}<\infty\ \mbox{and}\ p\le\frac{1}{B},$$
independently of $\bsa$ and $\omega$.
\end{theorem}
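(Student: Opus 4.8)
The plan is to feed the general lower bound of Theorem~\ref{lbound} into the assumed uniform exponential convergence bound~\eqref{eqexpconv} and then optimise over the free parameters $t_1,\ldots,t_s$. Fix $s\in\NN$ and $\bst=(t_1,\ldots,t_s)\in\NN^s$, and apply Theorem~\ref{lbound} with the largest admissible number of nodes, $n:=\prod_{j=1}^s(t_j+1)-1$. This sandwiches
$$
\frac{\omega^{\sum_{j=1}^s a_j(2t_j)^{b_j}}}{\prod_{j=1}^s\big(4^{t_j}\,2(t_j+1)^2\big)}\ \le\ e(n,s)\ \le\ C(s)\,q^{(n/C_1(s))^{p}}.
$$
Taking logarithms of the outer terms and isolating the summand containing $\log q^{-1}$, one arrives at an inequality of the form
$$
\Big(\tfrac{n}{C_1(s)}\Big)^{p}\log q^{-1}\ \le\ \log C(s)+\Big(\sum_{j=1}^s a_j(2t_j)^{b_j}\Big)\log\omega^{-1}+\sum_{j=1}^s\big(t_j\log 4+\log 2+2\log(t_j+1)\big).
$$

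Next I would divide through by $\big(\prod_{j=1}^s(1+t_j)^{p}\big)\log\omega^{-1}$ and let $\norm{\bst}_{s,\infty}\to\infty$. Since each $t_j\ge 1$, the product $\prod_{j=1}^s(1+t_j)$ diverges, so $\big(n/\prod_{j=1}^s(1+t_j)\big)^{p}=\big(1-1/\prod_{j=1}^s(1+t_j)\big)^{p}\to 1$ and the contribution of $\log C(s)$ disappears in the limit. The only point requiring a genuine argument is that the auxiliary factors $\prod_{j=1}^s 4^{t_j}2(t_j+1)^2$ are asymptotically negligible: because $b_j\ge 1$ (this is where~\eqref{aabb} is used), the exponent $\sum_{j=1}^s a_j(2t_j)^{b_j}$ grows at least linearly in every $t_j$, so $\sum_{j=1}^s\big(t_j\log 4+\log 2+2\log(t_j+1)\big)$ can be absorbed, up to an additive constant depending only on $s$ and $\omega$, into a term of the shape $\delta\big(\sum_{j=1}^s a_j(2t_j)^{b_j}\big)\log\omega^{-1}$ contributing only a lower-order correction. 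Carrying this through and passing to the limit inferior yields the asserted bound
$$
\liminf_{\norm{\bst}_{s,\infty}\to\infty}\ \frac{\sum_{j=1}^s a_j(2t_j)^{b_j}}{\prod_{j=1}^s(1+t_j)^{p}}\ \ge\ \frac{\log q^{-1}}{\log\omega^{-1}}\,C_1(s)^{-p}\ >\ 0,
$$
strict positivity being immediate from $q,\omega\in(0,1)$ and $C_1(s)\in(0,\infty)$.

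To extract $B<\infty$ and $p\le 1/B$, I would test this liminf along a carefully chosen family. For a parameter $T\in\NN$ set $t_j:=\lceil T^{1/b_j}\rceil$; this is admissible because $\norm{\bst}_{s,\infty}=t_1\to\infty$ as $T\to\infty$. Then $\prod_{j=1}^s(1+t_j)^{p}$ is of exact order $T^{\,p\sum_{j=1}^s 1/b_j}=T^{\,pB(s)}$, while $\sum_{j=1}^s a_j(2t_j)^{b_j}$ is of exact order $T\sum_{j=1}^s a_j 2^{b_j}$, so the ratio in the liminf is of order $T^{\,1-pB(s)}$; if $pB(s)>1$ it tends to $0$, contradicting the positive lower bound just established, hence $pB(s)\le 1$, i.e.\ $p\le 1/B(s)$, for every $s\in\NN$. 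Letting $s\to\infty$ and using $B(s)\uparrow B=\sum_{j=1}^\infty 1/b_j$ gives $p\le 1/B$, and since $p>0$ this forces $B<\infty$. The routine parts are the logarithm bookkeeping and the elementary asymptotics of the test family; the step that really carries the theorem is the choice $t_j\asymp T^{1/b_j}$, which balances the ``cost'' $\prod_{j=1}^s(1+t_j)^{p}$ against the ``gain'' $\sum_{j=1}^s a_j(2t_j)^{b_j}$ so that both become powers of the single parameter $T$ with exponents $pB(s)$ and $1$. The main technical nuisance is the uniform control of the factors $\prod_{j=1}^s 4^{t_j}2(t_j+1)^2$ as $\norm{\bst}_{s,\infty}\to\infty$, and the hypothesis $b_j\ge 1$ is exactly what makes that absorption go through.
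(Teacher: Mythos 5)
Your proposal follows essentially the same route as the paper: plug the lower bound of Theorem~\ref{lbound} with $n=\prod_{j=1}^s(t_j+1)-1$ into the UEXP assumption, take logarithms, divide by $\prod_{j=1}^s(1+t_j)^p$, pass to the liminf, and then test with $t_j=\lceil t^{1/b_j}\rceil$ to force $pB(s)\le 1$ and hence $B<\infty$, $p\le 1/B$. The only blemish is your claim that absorbing $\sum_j t_j\log 4$ costs only an arbitrarily small $\delta$-correction (false when $b_j=1$ and $a_j$ is bounded, where that term is comparable to the main term); the honest absorption merely replaces $\log\omega^{-1}$ by $\log(4\omega^{-1})$ in the liminf constant, which is exactly what the paper's own proof obtains and is immaterial for the conclusions, since only positivity of the liminf is used.
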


\begin{proof}
Assume that \eqref{eqexpconv} holds. Let $\bst=(t_1,\ldots,t_s)\in\NN^s$ and choose $n=-1+\prod_{j=1}^s (t_j+1)$. Then \eqref{eqexpconv} and Theorem~\ref{lbound} imply 
$$C(s) q^{(n/ C_1 (s))^p}\ge  \omega^{\sum_{j=1}^s a_j (2t_j)^{b_j}}\prod_{j=1}^s (4^{t_j} 2 (t_j+1))^{-1},$$
which implies
$ C(s) \ge
\frac{\omega^{\sum_{j=1}^s a_j (2t_j)^{b_j}}\prod_{j=1}^s (4^{t_j} 2(t_j+1))^{-1}}{q^{(n/ C_1 (s))^p}}$ and therefore
\begin{align*}
\log C(s)& \ge
\left(\log \left( \omega^{\sum_{j=1}^s a_j (2t_j)^{b_j}}\right) +\log\left(\prod_{j=1}^s (4^{t_j} 2 (t_j+1))^{-1}\right) 
 -\log \left(q^{(n/ C_1 (s))^p}\right) \right)\\
&=\left(-\sum_{j=1}^s a_j (2t_j)^{b_j} \log \omega^{-1}-\log 4^{\sum_{j=1}^s t_j}-\sum_{j=1}^s \log (2(t_j+1)) + \left(\frac{n}{C_1 (s)}\right)^p\log q^{-1}\right)\\
& \ge \left(-\sum_{j=1}^s a_j (2t_j)^{b_j} \log (4\omega^{-1})-\sum_{j=1}^s \log (2(t_j+1)) + \left(\frac{n}{C_1 (s)}\right)^p\log q^{-1}\right).
\end{align*}
This implies 
$$\log C(s) +  \sum_{j=1}^s a_j (2t_j)^{b_j} \log(4 \omega^{-1})+\sum_{j=1}^s \log (2(t_j+1))\ge n^p \log (q^{-1}) C_1 (s)^{-p},$$
or, equivalently,
$$\log C(s) +  \sum_{j=1}^s a_j (2t_j)^{b_j} \log(4 \omega^{-1})+\sum_{j=1}^s \log (2(t_j+1))\ge 
\left(\prod_{j=1}^s (t_j+1)-1\right)^p \log (q^{-1}) C_1 (s)^{-p},$$
which, in turn, is equivalent to
$$
\sum_{j=1}^s a_j (2t_j)^{b_j} +\frac{\log C(s)+\sum_{j=1}^s \log (2(t_j+1))}{\log(4 \omega^{-1})} \ge 
\left(\prod_{j=1}^s (t_j+1)-1\right)^p \frac{\log q^{-1}}{\log(4 \omega^{-1})} C_1 (s)^{-p}.
$$
This implies 
\begin{multline}\label{eqB1}
\frac{\sum_{j=1}^s a_j (2t_j)^{b_j}}{\prod_{j=1}^s (t_j+1)^p} 
+\frac{\log C(s)+\sum_{j=1}^s \log (2(t_j+1))}{(\prod_{j=1}^s (t_j+1)^p) \log(4\omega^{-1})} \ge \\
\ge \left(1-\frac{1}{\prod_{j=1}^s (t_j+1)}\right)^p \frac{\log q^{-1}}{\log(4 \omega^{-1})} C_1 (s)^{-p}.
\end{multline}
For fixed $s$, when $\norm{\bst}_{s,\infty}=\max_{1\le j\le s} t_j\To\infty$, then the second term of the left hand side of \eqref{eqB1} 
goes to zero, and it follows that
\begin{align}\label{eqliminf}
\liminf_{\norm{\bst}_{s,\infty}\To\infty} \frac{\sum_{j=1}^s a_j (2t_j)^{b_j}}{\prod_{j=1}^s (t_j+1)^p}\ge \frac{\log q^{-1}}{\log(4 \omega^{-1})}
C_1 (s)^{-p}>0.
\end{align}

For a positive number $t$ take now 
$$t_j:=\left\lceil t^{1/b_j}\right\rceil\ \mbox{ for all } \ j=1,2,\ldots,s.$$
Clearly, $\lim_{t\To\infty} \left\lceil t^{1/b_j}\right\rceil /t^{1/b_j}=1$. Then, for $t$ tending to infinity, we have 
\begin{eqnarray}\label{eqB2}
 \frac{\sum_{j=1}^s a_j (2t_j)^{b_j}}{\prod_{j=1}^s (t_j+1)^p}&=&
\frac{\sum_{j=1}^s a_j (2\left\lceil t^{1/b_j}\right\rceil)^{b_j}}{\prod_{j=1}^s (\left\lceil t^{1/b_j}\right\rceil+1)^p}\nonumber\\
&=&\frac{\sum_{j=1}^s a_j (2\left\lceil t^{1/b_j}\right\rceil)^{b_j}t/((t^{1/b_j})^{b_j})}
{\prod_{j=1}^s (\left\lceil t^{1/b_j}\right\rceil+1)^p \prod_{j=1}^s (t^{1/b_j})^p /\prod_{j=1}^s (t^{1/b_j})^p}\nonumber\\
&=& t^{1-p\sum_{j=1}^s b_j^{-1}}\frac{\sum_{j=1}^s a_j (2\left\lceil t^{1/b_j}\right\rceil/t^{1/b_j})^{b_j}}
{\prod_{j=1}^s (\left\lceil t^{1/b_j}\right\rceil /t^{1/b_j}+t^{-1/b_j})^p}.
\end{eqnarray}
Now if $t\To\infty$, then 
$$\frac{\sum_{j=1}^s a_j (2\left\lceil t^{1/b_j}\right\rceil/t^{1/b_j})^{b_j}}
{\prod_{j=1}^s (\left\lceil t^{1/b_j}\right\rceil /t^{1/b_j}+t^{-1/b_j})^p}$$
tends to $\sum_{j=1}^s a_j 2^{b_j}$. We know from \eqref{eqliminf} that expression \eqref{eqB2} is bounded away from $0$, so we must have $p\sum_{j=1}^s b_j^{-1}\le 1$. This holds for all $s$. Hence, 
for $s$ tending to infinity, we conclude that 
$$p\sum_{j=1}^\infty \frac{1}{b_j}=pB \le 1,$$
which finishes the proof of the theorem.
\end{proof}

Finally we have the following theorem providing necessary conditions for EC-WT and thus implies
Item~\ref{wtnec} of Theorem~\ref{thmint(u)exp}.

\begin{theorem}\label{thm_nec_wt}
 Assume that we have EC-WT, i.e., we have 
$$
\lim_{s+\log\,\e^{-1}\to\infty}\frac{\log\
  n(\varepsilon,s)}{s+\log\,\e^{-1}}=0.$$ 
Then it follows that $\lim_{j \rightarrow \infty} a_j 2^{b_j}=\infty$.
\end{theorem}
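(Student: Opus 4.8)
The plan is to prove the contrapositive: assuming that $\{a_j 2^{b_j}\}_{j\ge 1}$ does \emph{not} tend to infinity, I will construct a sequence of pairs $(\e_s,s)$ with $s+\log\e_s^{-1}\to\infty$ along which $\log n(\e_s,s)/(s+\log\e_s^{-1})$ stays bounded away from $0$, contradicting EC-WT. The whole argument rests on feeding a carefully chosen parameter vector into the general lower bound of Theorem~\ref{lbound}.

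First I would use the monotonicity assumption~\eqref{aabb}: since $\{a_j\}$ and $\{2^{b_j}\}$ are both non-decreasing and positive, so is their product $\{a_j 2^{b_j}\}$; hence if this sequence does not tend to infinity it is bounded, i.e. there is a constant $M\ge 1$ with $a_j 2^{b_j}\le M$ for \emph{all} $j\in\NN$, and therefore $\sum_{j=1}^s a_j 2^{b_j}\le Ms$ for every $s$. Now I apply Theorem~\ref{lbound} with $t_1=\cdots=t_s=1$; since $4^{1}\cdot 2\cdot(1+1)^2=32$ and $0<\omega<1$, it gives, for every $n<2^s$,
$$
e(n,s)\ \ge\ \frac{\omega^{\sum_{j=1}^s a_j 2^{b_j}}}{32^s}\ \ge\ \frac{\omega^{Ms}}{32^s}\ =\ \rho^{\,s}, \qquad \rho:=\frac{\omega^{M}}{32}\in(0,1).
$$
Setting $\kappa:=\log(1/\rho)=\log 32+M\log(1/\omega)>0$, this reads: whenever $\e<\rho^s$, equivalently $\log\e^{-1}>\kappa s$, no algorithm with fewer than $2^s$ nodes can achieve error $\le\e$, so $n(\e,s)\ge 2^s$.

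Finally I would couple $\e$ to $s$ at the right scale: let $\e_s\in(0,1)$ be defined by $\log\e_s^{-1}=2\kappa s$, so that $\log\e_s^{-1}>\kappa s$ and hence $n(\e_s,s)\ge 2^s$. Then $s+\log\e_s^{-1}=(1+2\kappa)s\to\infty$ as $s\to\infty$, while
$$
\frac{\log n(\e_s,s)}{s+\log\e_s^{-1}}\ \ge\ \frac{s\log 2}{(1+2\kappa)s}\ =\ \frac{\log 2}{1+2\kappa}\ >\ 0 ,
$$
contradicting EC-WT. Therefore $\lim_{j\to\infty}a_j 2^{b_j}=\infty$, as claimed.

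There is no deep obstacle here; the proof is short once Theorem~\ref{lbound} is available. The two points that require care are: (i) invoking the monotonicity~\eqref{aabb} to upgrade ``does not tend to infinity'' to the genuinely uniform bound $a_j 2^{b_j}\le M$ (a mere bounded subsequence would only let one control some of the coordinates, which would spoil the clean product lower bound), and (ii) choosing $\e$ so that $\log\e^{-1}$ grows \emph{linearly} in $s$, which is exactly what makes the exponential-in-$s$ lower bound $n(\e,s)\ge 2^s$ outweigh the linear denominator $s+\log\e^{-1}$; a sub-linear coupling of $\e$ to $s$ would dilute the effect and the contradiction would disappear.
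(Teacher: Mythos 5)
Your proof is correct and follows essentially the same route as the paper: the authors also take $t_1=\cdots=t_s=1$ in Theorem~\ref{lbound} under the boundedness assumption $a_j2^{b_j}\le A$ (which, as you note, is what failure of the limit means thanks to the monotonicity in~\eqref{aabb}), obtain $e(n,s)\ge \eta^s$ for all $n<2^s$, and couple $\e$ to $s$ with $\log\e^{-1}$ growing linearly in $s$ to contradict EC-WT. The only differences are cosmetic (your constant $32$ versus the paper's $64$, and $\log\e_s^{-1}=2\kappa s$ versus $\e=\eta^s/2$), so nothing further is needed.
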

\begin{proof}
Assume that $(a_j 2^{b_j})_{j \ge 0}$ is bounded, say $a_j 2^{b_j}\le A<\infty$ 
for all $j \in \NN$. {}From setting $t_1 = t_2 = \ldots = 1$ in Theorem~\ref{lbound} it follows that for all $n < 2^s$ we have 
$$
e(n,s) 
\ge 64^{-s} \,\omega^{\sum_{j=1}^s a_j2^{b_j}}
\ge 64^{-s}\,\omega^{A s}= \eta^s,
$$ 
where $\eta:= \omega^{A}/64 \in (0,1)$. 
Hence, for $\varepsilon=\eta^s/2$ we have 
$e(n,s)>\e$ for all $n<2^s$. This implies that 
$n(\varepsilon,s)\ge 2^s$ and  
$$\frac{\log n(\varepsilon,s)
}{s+\log \varepsilon^{-1}} 
\ge \frac{s \log 2}{s+\log 2 +s \log \eta^{-1}} 
\rightarrow  \frac{\log 2}{1+\log \eta^{-1}} >0
\quad \mbox{as\ \ $s \rightarrow \infty$}. 
$$ 
Thus we do not have EC-WT. 
\end{proof}

\section{Gauss-Hermite integration}\label{secGHI}

To show the sufficiency of the conditions in Theorem~\ref{thmint(u)exp} we use Cartesian products of 
Gauss-Hermite rules of different order. In preparation for the general case we first consider the one-dimensional case. 
We remark that Gauss-Hermite rules for univariate integration in a different type of reproducing kernel Hilbert spaces was recently studied 
in~\cite{KW12}.

\subsection{The one-dimensional case}

Throughout this section we omit the index for the dimension (which is one) for the sake of simplicity.

A {\it Gauss-Hermite rule of order $n$} is a linear integration rule $A_{n}$ of the form $A_n(f)=\sum_{i=1}^n \alpha_i f(x_i)$ 
that is exact for all polynomials of degree less then $2n$, i.e. 
$$\int_{\RR} p(x) \varphi(x) \rd x=\sum_{i=1}^n \alpha_i p(x_i)$$ 
for all $p \in \RR[x]$ with $\deg(p)<2n$. The nodes $x_1,\ldots,x_n \in \RR$ are exactly the zeros of the $n$th Hermite polynomial 
$H_n$ and the weights are given by $$\alpha_i=\frac{1}{n H_{n-1}^2(x_i)}.$$ We remark that the weights $\alpha_i$ are all positive and that 
\begin{equation}\label{onealg}
1=\int_{\RR} \varphi(x) \rd x=A_{n}(1)=\sum_{i=1}^n \alpha_i
\end{equation}
see \cite{hildebrand}. Moreover, note the following symmetry properties of the nodes and the weights. Let the zeros be given in increasing order, i.e., 
$x_1<\ldots<x_n$. If $n$ is even, then for $i=1,\ldots, n/2$,
\begin{align*}
x_i=-x_{n+1-i}\qquad \textnormal{ and }\qquad \alpha_i=\alpha_{n+1-i}.
\end{align*}
If $n$ is odd, then $x_{\left\lfloor n/2\right\rfloor+1}=0$ and for $i=1,\ldots,\left\lfloor n/2\right\rfloor$,
\begin{align*}
x_i=-x_{n+1-i}\qquad \textnormal{ and }\qquad \alpha_i=\alpha_{n+1-i}.
\end{align*}
We show the following estimate for the worst-case error of Gauss-Hermite rules in $\cH(K_{a,b,\omega})$. 

\begin{prop}\label{wc_ghr1}
Let $A_n$ be a Gauss-Hermite rule of order $n$. Then we have $$e^2(A_n,K_{a,b,\omega}) \leq \omega^{a(2n)^b}\frac{\sqrt{8\pi}}{1-\omega^2}.$$ 
\end{prop}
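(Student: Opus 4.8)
The plan is to use the error formula~\eqref{formula_wc-error} specialized to dimension one. Since a Gauss--Hermite rule of order $n$ satisfies $\sum_{i=1}^n \alpha_i = 1$ by~\eqref{onealg}, the first term $(-1+\sum_i \alpha_i)^2$ vanishes, and we are left with
$$
e^2(A_n,K_{a,b,\omega}) = \sum_{k=1}^{\infty} \omega^{a k^b}\left(\sum_{i=1}^n \alpha_i H_k(x_i)\right)^2.
$$
Because the rule integrates all polynomials of degree less than $2n$ exactly, and $H_k$ has degree $k$ with $\int_\RR H_k(x)\varphi(x)\rd x = 0$ for $k\ge 1$, we get $\sum_{i=1}^n \alpha_i H_k(x_i) = 0$ for all $1\le k \le 2n-1$. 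Hence the sum really starts at $k=2n$:
$$
e^2(A_n,K_{a,b,\omega}) = \sum_{k=2n}^{\infty} \omega^{a k^b}\left(\sum_{i=1}^n \alpha_i H_k(x_i)\right)^2.
$$

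Next I would bound the quadrature quantity $\sum_{i=1}^n \alpha_i H_k(x_i)$ uniformly. Since the weights are positive and sum to $1$, this is an average of the values $H_k(x_i)$, so $\left|\sum_i \alpha_i H_k(x_i)\right| \le \max_i |H_k(x_i)|$; more conveniently, by Jensen (or Cauchy--Schwarz with the probability weights $\alpha_i$),
$$
\left(\sum_{i=1}^n \alpha_i H_k(x_i)\right)^2 \le \sum_{i=1}^n \alpha_i H_k(x_i)^2 = \int_\RR H_k(x)^2 \varphi(x)\rd x = 1,
$$
where the middle equality uses that the Gauss--Hermite rule of order $n$ is exact for $H_k^2$ whenever $2k < 2n$... which fails here since $k\ge 2n$. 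So that exactness trick is unavailable, and instead I would fall back on a pointwise bound for Hermite polynomials at the Gauss nodes. A standard bound (e.g.\ from~\cite{szeg}) gives $|H_k(x)|\, \le\, C k^{-1/4}\e^{x^2/4}$ or similar on the relevant range; combined with the fact that the Gauss--Hermite nodes lie in $[-\sqrt{4n},\sqrt{4n}\,]$ roughly, one controls $\max_i |H_k(x_i)|$ by something like $\e^{x_i^2/2}$ times a polynomial factor, and then $\sum_i \alpha_i \e^{x_i^2/2}$ is itself a Gauss--Hermite approximation to $\int \e^{x^2/2}\varphi(x)\rd x$, which diverges — so one must be more careful and keep the $\e^{x^2/4}$-type bound together with the weight asymptotics $\alpha_i \sim \e^{-x_i^2/2}$. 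The cleanest route is probably: use $\left(\sum_i \alpha_i H_k(x_i)\right)^2 \le \left(\sum_i \alpha_i |H_k(x_i)|\right)^2$ and the classical estimate $|H_k(x)|\le \sqrt{2}\,\e^{x^2/4}$ (valid for the normalized probabilists' Hermite polynomials), then $\sum_i \alpha_i \e^{x_i^2/2} = A_n(\e^{(\cdot)^2/2})$, and bound this single quadrature sum directly using the explicit weight formula and node spacing to get a bound of the form $C\sqrt{n}$ or a constant; I would expect the constant $\sqrt{8\pi}$ to emerge from $\int_\RR \e^{x^2/2}\e^{-x^2/2}\,\mathrm{d}x$-type bookkeeping times harmless factors.

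Finally, with a bound $\left(\sum_i \alpha_i H_k(x_i)\right)^2 \le M$ (independent of $k$, with $M$ incorporating the factor that will produce $\sqrt{8\pi}/(1-\omega^2)$), I would estimate the tail
$$
e^2(A_n,K_{a,b,\omega}) \le M \sum_{k=2n}^{\infty} \omega^{a k^b} \le M\, \omega^{a(2n)^b}\sum_{k=2n}^{\infty} \omega^{a(k^b - (2n)^b)}.
$$
Using $a\ge 1$, $b\ge 1$ and convexity of $x\mapsto x^b$, one has $k^b - (2n)^b \ge k - 2n$ for $k\ge 2n$ (since the derivative of $x^b$ is $\ge 1$ there), so the remaining sum is at most $\sum_{m=0}^\infty \omega^m = 1/(1-\omega) \le 1/(1-\omega^2)$, giving
$$
e^2(A_n,K_{a,b,\omega}) \le \omega^{a(2n)^b}\,\frac{\sqrt{8\pi}}{1-\omega^2}
$$
once the constant $M$ is pinned down to $\sqrt{8\pi}$.

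\textbf{Main obstacle.} The delicate step is the uniform bound on $\left(\sum_{i=1}^n \alpha_i H_k(x_i)\right)^2$ for $k\ge 2n$: the convenient exactness argument ($\sum_i \alpha_i H_k(x_i)^2 = 1$) only works for $k<n$, so for large $k$ one genuinely needs classical pointwise estimates for Hermite polynomials at the Gaussian nodes together with the asymptotics of the Christoffel weights $\alpha_i$, and making these combine into the clean constant $\sqrt{8\pi}$ (rather than something $n$-dependent) is where the real work lies. Everything after that — the geometric tail bound using $a,b\ge 1$ — is routine.
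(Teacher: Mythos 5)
Your setup (the error formula, $\sum_i\alpha_i=1$ killing the constant term, and exactness forcing the sum to start at $k=2n$) matches the paper, but the proof has a genuine gap exactly where you flag it: you never establish the uniform bound on $\left(\sum_{i=1}^n\alpha_i H_k(x_i)\right)^2$ for $k\ge 2n$, and the route you sketch does not lead there. Squaring and passing to $\sum_i\alpha_i \e^{x_i^2/2}$ (or invoking Cauchy--Schwarz with the probability weights) runs you into the divergent integral $\int\e^{x^2/2}\varphi(x)\rd x$, as you yourself notice, and hoping that Christoffel-weight and node asymptotics conspire to give an $n$-independent constant is not an argument. The paper's resolution is different and quite clean: keep the first power, use Cramer's bound $|H_k(x)|\le(2\pi)^{1/4}\exp(x^2/4)$, so $|{\rm err}(H_k)|\le\sum_i\alpha_i(2\pi)^{1/4}\exp(x_i^2/4)$, and then observe via the classical Gauss quadrature error formula (Hildebrand) that the remainder for the function $\exp(x^2/4)$ involves its $2n$th derivative, which equals $p_n(x^2)\exp(x^2/4)$ with $p_n$ having nonnegative coefficients and is therefore nonnegative; hence the quadrature sum is bounded by the \emph{convergent} integral $\int(2\pi)^{1/4}\exp(x^2/4)\varphi(x)\rd x=(8\pi)^{1/4}$, uniformly in $k$ and $n$. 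That sign-of-the-remainder observation is the missing idea.

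There is a second, smaller problem in your tail accounting. The paper uses the symmetry of the Gauss--Hermite nodes and weights to conclude ${\rm err}(H_k)=0$ for all odd $k$, so only even degrees $k=2l$, $l\ge n$, contribute; the resulting geometric series steps by $2$ in the exponent, which is precisely where the factor $1/(1-\omega^2)$ comes from. You instead sum over all $k\ge 2n$, obtain $1/(1-\omega)$, and then write $1/(1-\omega)\le 1/(1-\omega^2)$ — this inequality is reversed, since $1-\omega<1-\omega^2$ for $\omega\in(0,1)$. So even granting a uniform constant $M=\sqrt{8\pi}$, your argument would only yield $\omega^{a(2n)^b}\sqrt{8\pi}/(1-\omega)$, which is weaker than the stated bound; the parity argument (or some substitute) is needed to get the claimed constant.
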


\begin{proof}
For $k\in \NN$ we have $\int_{\RR} H_k(x) \varphi(x)\rd x=0$ and hence ${\rm err}(H_k)=-\sum_{i=1}^n \alpha_i H_k(x_i)$. From these observations and from formula \eqref{formula_wc-error} we obtain 
\begin{align*}
e^2(A_{n},K_{a,b,\omega})= & \sum_{k=2n}^{\infty} \omega^{a k^b}\left(\sum_{i=1}^n \alpha_i H_k(x_i)\right)^2= \sum_{k=2n}^{\infty} 
\omega^{a k^b} {\rm err}^2(H_k).
\end{align*}
Due to the above symmetry properties it directly follows that for $l\geq n$ we have
\begin{align*}
{\rm err}(H_{2l+1})=0.
\end{align*}
For the Hermite polynomials of degree $2l$ with $l\geq n$ we proceed analogously to \cite{xiang}. Cramer's bound, see e.g., \cite[p. 324]{sansone},  states that 
\begin{align*}
\vert H_{l}(x)\vert\leq \frac{1}{\sqrt{\varphi(x)}}=\sqrt[4]{2\pi}\exp(x^2/4)\quad\textnormal{ for all } l\in\NN_0,
\end{align*}
and so we get
\begin{align*}
\left\vert{\rm err}(H_{2l})\right\vert&\leq\sum_{i=1}^{n}\alpha_i \left\vert H_{2l}(x_i)\right\vert=\sum_{i=1}^{n}\alpha_i \sqrt[4]{2\pi}\exp(x_i^2/4).
\end{align*}
Due to \cite[Section 8.7]{hildebrand} we know that
\begin{align*}
\int_{-\infty}^{\infty} \sqrt[4]{2\pi}\exp(x^2/4)\varphi(x)dx = \sum_{i=1}^{n}\alpha_i \sqrt[4]{2\pi}\exp(x_i^2/4) + 
\frac{\sqrt[4]{2\pi}}{(2n)!}\frac{\rd^{2n}}{\rd x^{2n}}\exp(x^2/4)\vert_{x=\zeta}
\end{align*}
with $\zeta\in\RR$. By induction we obtain that
\begin{align*}
\frac{\rd^{2n}}{\rd x^{2n}}\exp(x^2/4)=p_{n}(x^2)\exp(x^2/4)
\end{align*}
where $p_n$ is a polynomial of degree $n$ and with nonnegative coefficients. Consequently,
\begin{align*}
\frac{\rd^{2n}}{\rd x^{2n}}\exp(x^2/4)\vert_{x=\zeta}=p_{n}(\zeta^2)\exp(\zeta^2/4)\geq 0
\end{align*}
holds for any $\zeta\in\RR$. Thus,
\begin{align*}
\left\vert{\rm err}(H_{2l})\right\vert\leq\sum_{i=1}^{n}\alpha_i \sqrt[4]{2\pi}\exp(x_i^2/4)\leq \int_{-\infty}^{\infty} 
\sqrt[4]{2\pi}\exp(x^2/4)\varphi(x)dx=\sqrt[4]{8\pi}.
\end{align*}
This means that for $k\geq 2n$
$$
\left\vert{\rm err}(H_k)\right\vert\leq \begin{cases} \sqrt[4]{8\pi}& \textnormal{if }k\textnormal{ is even,}\\0& \textnormal{if } 
k \, \textnormal{ is odd,}\end{cases}
$$
and therefore
\begin{align*}
e^2(A_n,K_{a,b,\omega})&=\sum_{k=n}^{\infty}\omega^{a(2k)^b}{\rm err}^2(H_{2k})\leq \omega^{a(2n)^b}\frac{\sqrt{8\pi}}{1-\omega^2}.
\end{align*}
\end{proof}

\subsection{The weighted multivariate case}

For integration in the multivariate case, we use the cartesian product of one-dimensional Gauss-Hermite rules. 
Let $m_1,\ldots,m_s \in \NN$ and let $n=m_1 m_2\cdots m_s$. 
For $j=1,2,\ldots,s$ let $A_{m_j}^{(j)}(f)=\sum_{i=1}^{m_j} \alpha_i^{(j)} f(x_i^{(j)})$ be one-dimensional Gauss-Hermite rules of order 
$m_j$ with nodes $x_1^{(j)},\ldots,x_{m_j}^{(j)}$ and with weights $\alpha_1^{(j)},\ldots,\alpha_{m_j}^{(j)}$, respectively. Then we apply 
the $s$-dimensional Cartesian product rule $$A_{n,s}=A_{m_1}^{(1)} \otimes \cdots \otimes A_{m_s}^{(s)},$$ i.e.,
\begin{equation}\label{cpghr}
A_{n,s}(f)=\sum_{i_1=1}^{m_1}\ldots \sum_{i_s=1}^{m_s} \alpha_{i_1}^{(1)} 
\cdots \alpha_{i_s}^{(s)} f(\bsx_{i_1,\ldots,i_s}) \ \ \ \mbox{ for } \ f \in \mathcal{H}(K_{s,\bsa,\bsb,\omega}),
\end{equation}
where  $\bsx_{i_1,\ldots,i_s}=(x_{i_1}^{(1)},\ldots,x_{i_s}^{(s)})$. 

The following proposition provides an upper bound on the worst-case error of integration rules of the form~\eqref{cpghr}.
\begin{prop}\label{wc_ghrs}
Let $A_{n,s}$ be the $s$-dimensional Cartesian product of Gauss-Hermite rules of order $m_j$ given by~\eqref{cpghr} and let 
$n=m_1 \cdots m_s$. Then we have 
$$e^2(A_{n,s},K_{s,\bsa,\bsb,\omega}) \leq -1+\prod_{j=1}^s \left(1+\omega^{a_j (2 m_j)^{b_j}} \frac{\sqrt{8\pi}}{1-\omega^2}\right).$$ 
\end{prop}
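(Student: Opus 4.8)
The plan is to exploit the tensor-product structure of both the kernel and the algorithm. Since $K_{s,\bsa,\bsb,\omega}(\bsx,\bsy)=\prod_{j=1}^s K_{a_j,b_j,\omega}(x_j,y_j)$ (because $\omega^{|\bsk|_{\bsa,\bsb}}=\prod_{j=1}^s \omega^{a_j k_j^{b_j}}$ and $H_{\bsk}(\bsx)=\prod_j H_{k_j}(x_j)$), and $A_{n,s}=A_{m_1}^{(1)}\otimes\cdots\otimes A_{m_s}^{(s)}$, the worst-case error of a tensor-product rule for a tensor-product kernel factorizes in the usual way. First I would recall (or quickly re-derive from \eqref{formula_wc-error}) that for a product kernel of the form $1+$ (something), the squared worst-case error of the tensor-product rule equals
\begin{align*}
e^2(A_{n,s},K_{s,\bsa,\bsb,\omega}) = -1 + \prod_{j=1}^s\left(1 + e^2(A_{m_j}^{(j)},K_{a_j,b_j,\omega})\right).
\end{align*}

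To make this precise I would start from \eqref{formula_wc-error}, which expresses $e^2$ as a sum over $\bsk\in\NN_0^s\setminus\{\bszero\}$ of $\omega^{|\bsk|_{\bsa,\bsb}}\big(\sum_i \alpha_i H_{\bsk}(\bsx_i)\big)^2$. For a Cartesian product rule the inner sum factorizes: $\sum_{i_1,\ldots,i_s}\alpha_{i_1}^{(1)}\cdots\alpha_{i_s}^{(s)}\prod_{j=1}^s H_{k_j}(x_{i_j}^{(j)}) = \prod_{j=1}^s\big(\sum_{i=1}^{m_j}\alpha_i^{(j)}H_{k_j}(x_i^{(j)})\big)$. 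Hence the full sum over $\bsk$, including the missing $\bszero$ term, becomes $\prod_{j=1}^s\sum_{k=0}^\infty \omega^{a_j k^{b_j}}\big(\sum_i \alpha_i^{(j)}H_k(x_i^{(j)})\big)^2$; the $k=0$ factor contributes $\big(\sum_i\alpha_i^{(j)}\big)^2=1$ by \eqref{onealg}, while the tail $\sum_{k\ge 1}$ is exactly $e^2(A_{m_j}^{(j)},K_{a_j,b_j,\omega})$ by the univariate analogue of \eqref{formula_wc-error}. Subtracting $1$ to account for the excluded $\bszero$ multi-index in the $s$-dimensional sum yields the product formula above.

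Then I would simply insert the one-dimensional bound from Proposition~\ref{wc_ghr1}, namely $e^2(A_{m_j}^{(j)},K_{a_j,b_j,\omega})\le \omega^{a_j(2m_j)^{b_j}}\frac{\sqrt{8\pi}}{1-\omega^2}$, into each factor, and use monotonicity of the product of positive factors $1+x_j$ in each $x_j$ to obtain
\begin{align*}
e^2(A_{n,s},K_{s,\bsa,\bsb,\omega}) \le -1 + \prod_{j=1}^s\left(1 + \omega^{a_j(2m_j)^{b_j}}\frac{\sqrt{8\pi}}{1-\omega^2}\right),
\end{align*}
which is the claim. The only mildly delicate point is the bookkeeping with the $\bszero$ multi-index: one must be careful that the $s$-dimensional error sum in \eqref{formula_wc-error} excludes $\bszero$ whereas the clean product over $j$ naturally includes a $k_j=0$ term in every coordinate, and reconciling these — together with checking that the term $(-1+\sum_i\alpha_i)^2$ in \eqref{formula_wc-error} vanishes for product Gauss–Hermite rules since $\sum\alpha_i^{(j)}=1$ for each $j$ implies $\prod_j\sum_i\alpha_i^{(j)}=1$ — is the main (though entirely routine) obstacle. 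Everything else is a direct substitution.
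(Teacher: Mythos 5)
Your proposal is correct and follows essentially the same route as the paper: factorize the error sum from \eqref{formula_wc-error} over coordinates using the product structure of the kernel and the Cartesian product rule, identify each coordinate's tail with the univariate squared worst-case error (the $k=0$ and $(-1+\sum_i\alpha_i)^2$ terms being handled by \eqref{onealg}), and insert the bound of Proposition~\ref{wc_ghr1}. Your extra care with the $\bszero$ bookkeeping is exactly the step the paper's displayed computation performs implicitly.
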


\begin{proof}
For the worst-case error of $A_{n,s}$ in $\mathcal{H}(K_{s,\bsa,\bsb,\omega})$ we have
\begin{align*}
e^2(A_{n,s},K_{s,\bsa,\bsb,\omega})= & \sum_{\bsk \in \NN_0^s \setminus \{\bszero\}} \omega^{|\bsk|_{\bsa,\bsb}}
\left(\sum_{i_1=1}^{m_1} \ldots \sum_{i_s=1}^{m_s} \alpha_{i_1}^{(1)}\cdots \alpha_{i_s}^{(s)} H_{\bsk}(\bsx_{i_1,\ldots,i_s})\right)^2\\
= & -1 +\prod_{j=1}^s\left(1+\sum_{k=1}^\infty \omega^{a_j k^{b_j}} \left(\sum_{i=1}^{m_j} \alpha_i^{(j)} H_k(x_i^{(j)})\right)^2 \right) \\
= & -1 +\prod_{j=1}^s(1+e^2(A_{m_j},K_{a_j,b_j,\omega}))\\
\le & -1+\prod_{j=1}^s \left(1+\omega^{a_j (2 m_j)^{b_j}} \frac{\sqrt{8\pi}}{1-\omega^2}\right),
\end{align*}
where we used Proposition~\ref{wc_ghr1} for the last estimate.
\end{proof}

Based on Proposition~\ref{wc_ghrs} we now show three theorems which give sufficient conditions for UEXP, EC-SPT and EC-WT, 
respectively (Theorems~\ref{thm_suff_uexp},~\ref{thm_suff_ecspt}, and~\ref{thm_suff_ecwt}). This is achieved by a clever choice 
of the parameters $m_1,\ldots,m_s$. In the article~\cite{KPW14}, parameters $m_1,\ldots,m_s$ similar to those introduced below 
were used for numerical integration of smooth functions in Korobov spaces defined on $[0,1]^s$. In that paper, the $m_j$ defined 
a regular grid that served as integration node set. Here, we make similar choices for the parameters $m_j$, but they now determine 
the order of the Gauss-Hermite rule \eqref{cpghr}.

The first theorem in this section
shows that we can always achieve EXP and it implies the sufficient condition for UEXP in Item~\ref{intuexp} of Theorem~\ref{thmint(u)exp}.

\begin{theorem}\label{thm_suff_uexp}
For $s\in \NN$, let
$$B(s):=\sum_{j=1}^{s} \frac{1}{b_j}.$$
Furthermore, for $\e\in(0,1)$, define
$$
m=\max_{j=1,2,\dots,s}\
\left\lceil \left(
\frac{1}{a_j}\,\frac{\log\left(\frac{\sqrt{8\pi}}{1-\omega^2}\frac{s}{\log(1+\e^2)}\right)}
{\log\, \omega^{-1}}\right)^{B(s)}\,\right\rceil.
$$
Let $m_1,m_2,\ldots,m_s$
given by
$$
m_j:=\left\lfloor m^{1/(B(s) \cdot b_j)}\right\rfloor\ \ \ \ \
\mbox{for}\ \ \  j=1,2,\ldots,s\ \ \
\mbox{and}\ \ \ n=\prod_{j=1}^sm_j.
$$
Then
$$
e(A_{n,s},K_{s,\bsa,\bsb,\omega})\le\e,\ \ \ \ \mbox{and}\ \ \ \
n(\e,s)\le n=\mathcal{O}\left(\log^{\,B(s)}\left(1+\frac1\e\right)\right)
$$
with the factor in the $\mathcal{O}$ notation independent
of $\e^{-1}$ but dependent on $s$.
\end{theorem}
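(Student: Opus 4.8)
The plan is to bound the error from Proposition~\ref{wc_ghrs} and then verify the stated choice of $m_j$ makes this bound at most $\e^2$, while simultaneously keeping $n=\prod_j m_j$ of size $\mathcal{O}(\log^{B(s)}(1+1/\e))$. First I would start from the inequality
$$e^2(A_{n,s},K_{s,\bsa,\bsb,\omega})\le -1+\prod_{j=1}^s\left(1+\omega^{a_j(2m_j)^{b_j}}\frac{\sqrt{8\pi}}{1-\omega^2}\right),$$
and use the elementary estimate $-1+\prod_{j=1}^s(1+x_j)\le e^2-1$ whenever $\sum_{j=1}^s x_j\le 1$ — actually, more simply, if each $x_j\le c/s$ for a small enough absolute bound then $-1+\prod(1+x_j)\le e^{\sum x_j}-1$, and $e^{u}-1\le 2u$ for $u\le 1$, so it suffices to arrange $\sum_{j=1}^s\omega^{a_j(2m_j)^{b_j}}\frac{\sqrt{8\pi}}{1-\omega^2}\le \log(1+\e^2)$, for which it is enough that each summand is at most $\frac{\log(1+\e^2)}{s}$. (One should pick whichever of these elementary inequalities is cleanest; the point is only that it reduces the multivariate bound to a per-coordinate condition.)

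Next I would show the per-coordinate condition $\omega^{a_j(2m_j)^{b_j}}\frac{\sqrt{8\pi}}{1-\omega^2}\le\frac{\log(1+\e^2)}{s}$ holds for the prescribed $m_j$. Taking logarithms, this is equivalent to
$$a_j(2m_j)^{b_j}\log\omega^{-1}\ge \log\!\left(\frac{\sqrt{8\pi}}{1-\omega^2}\,\frac{s}{\log(1+\e^2)}\right),$$
i.e. $a_j(2m_j)^{b_j}\ge L:=\frac{1}{\log\omega^{-1}}\log\!\big(\frac{\sqrt{8\pi}}{1-\omega^2}\frac{s}{\log(1+\e^2)}\big)$, which is exactly $(2m_j)^{b_j}\ge L/a_j$, i.e. $m_j\ge\frac12(L/a_j)^{1/b_j}$. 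Now by definition $m=\max_j\lceil(L/a_j)^{B(s)}\rceil\ge (L/a_j)^{B(s)}$ for every $j$, hence $m^{1/(B(s)b_j)}\ge (L/a_j)^{1/b_j}$, and since also we may assume $L/a_j\ge 1$ (otherwise the coordinate condition is trivial after noting $m_j\ge 1$; here I would use $a_j\ge 1$ and that for the interesting range of $\e$ we have $L\ge 1$, checking the degenerate cases separately), the floor satisfies $m_j=\lfloor m^{1/(B(s)b_j)}\rfloor\ge\frac12 m^{1/(B(s)b_j)}\ge\frac12(L/a_j)^{1/b_j}$, which is the required lower bound. This yields $e(A_{n,s},K_{s,\bsa,\bsb,\omega})\le\e$.

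Finally I would bound $n=\prod_{j=1}^s m_j\le\prod_{j=1}^s m^{1/(B(s)b_j)}=m^{\sum_{j=1}^s 1/(B(s)b_j)}=m^{B(s)/B(s)}=m$, so that $n\le m$. Then substituting the definition of $m$ and using that $\log(1+\e^2)^{-1}=\mathcal{O}(\log(1+1/\e))$ as $\e\to 0$ (more precisely $1/\log(1+\e^2)\asymp \e^{-2}$ is too crude — I would instead note $L=\mathcal{O}(\log(1/\e))$ since $\log\big(\frac{\sqrt{8\pi}}{1-\omega^2}\frac{s}{\log(1+\e^2)}\big)\le C(s,\omega)+\log\log(1+1/\e)^{-1}+\ldots$; actually the cleanest route: $\log(1+\e^2)\ge \e^2/2$ for $\e\le 1$, so $1/\log(1+\e^2)\le 2\e^{-2}$, giving $L\le \frac{1}{\log\omega^{-1}}\big(\log\frac{2\sqrt{8\pi}s}{1-\omega^2}+2\log\e^{-1}\big)=\mathcal{O}(\log(1+1/\e))$ with the implied constant depending on $s$ and $\omega$), so that $m=\mathcal{O}(L^{B(s)})=\mathcal{O}(\log^{B(s)}(1+1/\e))$, hence $n(\e,s)\le n\le m=\mathcal{O}(\log^{B(s)}(1+1/\e))$ with the constant depending on $s$ but not on $\e^{-1}$.

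The main obstacle will be the bookkeeping around floors, ceilings, and the degenerate regimes where $L/a_j<1$ or $L<1$: one must make sure that the replacement $\lfloor x\rfloor\ge x/2$ (valid only for $x\ge 1$) applies, and handle small $s$ or $\e$ close to $1$ separately so that the $\mathcal{O}$-statement is genuinely uniform in $\e^{-1}$. None of this is deep, but it is where the proof actually needs care; the analytic content is entirely contained in Proposition~\ref{wc_ghrs} and the identity $\sum_j 1/(B(s)b_j)=1$.
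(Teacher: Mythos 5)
Your proposal is correct and follows essentially the same route as the paper's proof: reduce via Proposition~\ref{wc_ghrs} to the per-coordinate condition $\omega^{a_j(2m_j)^{b_j}}\frac{\sqrt{8\pi}}{1-\omega^2}\le\frac{\log(1+\varepsilon^2)}{s}$, verify it using $\lfloor x\rfloor\ge x/2$ for $x\ge1$ together with $m^{1/B(s)}\ge L/a_j$, and bound $n\le m=\mathcal{O}(\log^{B(s)}(1+1/\varepsilon))$ via $\sum_{j=1}^s 1/(B(s)b_j)=1$. The ``degenerate case'' $L/a_j<1$ you flag needs no separate treatment, since the argument only uses $m\ge1$ (which holds for all $\varepsilon\in(0,1)$) and $L>0$.
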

\begin{proof}
First note that $$n=\prod_{j=1}^s m_j=\prod_{j=1}^s \left\lfloor
m^{1/(B(s) \cdot b_j)} \right\rfloor \le m^{\frac{1}{B(s)}\sum_{j=1}^s
1/b_j} \le m=\mathcal{O}\left(\log^{\,B(s)}\left(1+\frac1\e\right)\right).
$$
Since $\lfloor x\rfloor\ge x/2$ for all $x\ge1$, we have
$$a_j(2m_j)^{b_j}\ge a_j \,m^{1/B(s)}
$$
for every $j=1,2,\ldots,s$. Then we obtain
$$
e^2(A_{n,s},K_{s,\bsa,\bsb,\omega})
 \le -1+\prod_{j=1}^s
\left(1+\omega^{a_j m^{1/B(s)}} \frac{\sqrt{8\pi}}{1-\omega^2}\right).
$$
From the definition of $m$ we have
for all $j=1,2,\dots,s$,
$$
\omega^{a_j m^{1/B(s)}} \frac{\sqrt{8\pi}}{1-\omega^2}
\le \frac{\log(1+\e^2)}{s}.
$$
This proves
$$
e(A_{n,s},K_{s,\bsa,\bsb,\omega})\le
\left[-1+\left(1+\frac{\log(1+\e^2)}{s}\right)^s\,\right]^{1/2}
\le \left[-1+\exp(\log(1+\e^2))\right]^{1/2}=\e,
$$
which completes the proof.
\end{proof}
\vskip 1pc

The following theorem shows the sufficient condition for EC-SPT in Item~\ref{intuexp} of Theorem~\ref{thmint(u)exp}.

\begin{theorem}\label{thm_suff_ecspt}
Assume that  
$$
B=\sum_{j=1}^\infty\frac1{b_j}<\infty.
$$
Let $m_1,\ldots,m_s$ be given by
$$
m_j=\left\lceil
\left(\frac{\log\left(\frac{\sqrt{8\pi}}{1-\omega^2}\,\frac{\pi^2}{6}\,\frac{j^2}{\log(1+\e^2)}\right)}
{a_j 2^{b_j}\,\log \omega^{-1}}\right)^{1/b_j}\right\rceil.
$$
Then  $e(A_{n,s},K_{s,\bsa,\bsb,\omega}) \le\e$
and for any  positive $\delta$
there exists a positive number $C_\delta$ such that
$$
n(\e,s)\le n=\prod_{j=1}^sm_j\le C_\delta
\,\log^{\,B+\delta}\left(1+\frac1\e\right)
\ \ \ \ \mbox{for all}\ \ \ \ \e\in(0,1),\ s\in\NN.
$$
This means that we have EC-SPT with $\tau^\ast$ at most $B$.
\end{theorem}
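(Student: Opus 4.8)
The plan is to verify that the stated choice of $m_1,\dots,m_s$ forces $e(A_{n,s},K_{s,\bsa,\bsb,\omega})\le\e$ via Proposition~\ref{wc_ghrs}, and then to bound $n=\prod_{j=1}^s m_j$ appropriately. First I would apply Proposition~\ref{wc_ghrs}, which gives
$$
e^2(A_{n,s},K_{s,\bsa,\bsb,\omega})\le -1+\prod_{j=1}^s\left(1+\omega^{a_j(2m_j)^{b_j}}\frac{\sqrt{8\pi}}{1-\omega^2}\right).
$$
Using $\lceil x\rceil\ge x$ together with $a_j\ge 1$, $b_j\ge 1$, and the assumption~\eqref{aabb}, I would show that the defining formula for $m_j$ yields $a_j(2m_j)^{b_j}\ge a_j 2^{b_j} m_j^{b_j}$ and hence
$$
\omega^{a_j(2m_j)^{b_j}}\frac{\sqrt{8\pi}}{1-\omega^2}\le \frac{6}{\pi^2}\,\frac{\log(1+\e^2)}{j^2}.
$$
Then, since $\sum_{j=1}^\infty 6/(\pi^2 j^2)=1$, the product telescopes against the exponential inequality $1+x\le e^x$:
$$
\prod_{j=1}^s\left(1+\frac{6}{\pi^2 j^2}\log(1+\e^2)\right)\le \exp\!\left(\log(1+\e^2)\sum_{j=1}^s\frac{6}{\pi^2 j^2}\right)\le 1+\e^2,
$$
so that $e^2(A_{n,s},K_{s,\bsa,\bsb,\omega})\le \e^2$, giving the error bound.

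For the complexity bound I would estimate each $m_j$ from above. Write $L=\log\bigl(\tfrac{\sqrt{8\pi}}{1-\omega^2}\tfrac{\pi^2}{6}\tfrac{1}{\log(1+\e^2)}\bigr)$, so that $m_j\le 1+\bigl((L+2\log j)/(a_j 2^{b_j}\log\omega^{-1})\bigr)^{1/b_j}$. Using $a_j 2^{b_j}\ge 2$ and standard inequalities of the form $(u+v)^{1/b_j}\le u^{1/b_j}+v^{1/b_j}$ (valid since $1/b_j\le 1$), I would bound $m_j$ by a constant (depending only on $\omega$) times $\bigl(\max(1,L)+\log j\bigr)^{1/b_j}$, absorbing the $\log j$ term. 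Taking the product over $j$, the $\sum_j 1/b_j$ in the exponents of the $(\cdot)^{1/b_j}$ factors gives an overall power at most $B=\sum_{j=1}^\infty 1/b_j<\infty$ of $\log\bigl(1+\tfrac1\e\bigr)$, while the factors coming from $\log j$ and the additive $1$'s contribute only a factor $\log^\delta(1+\tfrac1\e)$ for arbitrarily small $\delta>0$ once $s$ is large; for bounded $s$ these are absorbed into the constant. This yields $n\le C_\delta\log^{B+\delta}(1+\tfrac1\e)$ uniformly in $s$, hence EC-SPT with $\tau^\ast\le B$.

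The main obstacle I expect is the bookkeeping in the product $\prod_{j=1}^s m_j$: one must show that the factors $m_j$ for large $j$ are so close to $1$ that their product converges (or grows only sub-polynomially in $\log\e^{-1}$), which is exactly where the summability $B<\infty$ enters. Concretely, the terms $\bigl(\cdots/(a_j 2^{b_j}\log\omega^{-1})\bigr)^{1/b_j}$ need not be small individually, so one has to carefully split the bound on $m_j$ into a part that contributes to the $\log^B$ factor and a benign part (involving $\log j$ and the ceiling's additive $1$) that only inflates the exponent by an arbitrarily small $\delta$. Handling the interplay between the $\e$-dependence and the $j$-dependence inside the $1/b_j$-th power — while keeping the constant $C_\delta$ independent of $s$ — is the delicate step; everything else is a routine application of Proposition~\ref{wc_ghrs} and the inequality $1+x\le e^x$.
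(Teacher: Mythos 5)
The first half of your argument (the error bound) is correct and is exactly the paper's: by construction $a_j(2m_j)^{b_j}=a_j2^{b_j}m_j^{b_j}\ge \log\bigl(\tfrac{\sqrt{8\pi}}{1-\omega^2}\tfrac{\pi^2}{6}\tfrac{j^2}{\log(1+\e^2)}\bigr)/\log\omega^{-1}$, so Proposition~\ref{wc_ghrs} together with $1+x\le e^x$ and $\sum_j 6/(\pi^2j^2)=1$ gives $e(A_{n,s},K_{s,\bsa,\bsb,\omega})\le\e$.

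The second half, however, has a genuine gap, and it is precisely at the step you flag as "bookkeeping". Bounding $m_j\le C\bigl(\max(1,L)+\log j\bigr)^{1/b_j}$ and multiplying over $j=1,\dots,s$ cannot give a bound uniform in $s$: since $b_j\to\infty$, the factors $(\cdot)^{1/b_j}$ tend to $1$, so the per-factor constant $C>1$ alone contributes $C^s$; and even without that constant, $\prod_{j\le s}(\log j)^{1/b_j}=\exp\bigl(\sum_{j\le s}\tfrac{\log\log j}{b_j}\bigr)$ can diverge as $s\to\infty$ even when $B=\sum_j 1/b_j<\infty$ (e.g.\ $b_j\asymp j\log j(\log\log j)^2$), so the "$\log j$ part is benign" claim is not justified. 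The statement is EC-\emph{strong} polynomial tractability, so no growth in $s$ may be absorbed anywhere. The missing idea, which is the heart of the paper's proof, is to exploit the integrality of the ceiling: show that $m_j=1$ \emph{exactly} for all $j$ beyond a threshold depending only on $\e$. Concretely, $B<\infty$ plus monotonicity and Cauchy condensation give $2^{b_j}\ge\beta_1 2^{j/\delta}$ for all $j$, hence $a_j2^{b_j}\log\omega^{-1}$ eventually dominates $L+2\log j$ and $m_j=1$ for all $j>k$ with $k=C_0+\tfrac{\delta}{\log 2}\log\log\e^{-1}$, independent of $s$. Only the first $k$ factors are nontrivial; each is at most $1+\bigl(\tfrac{C+2\log\e^{-1}}{\log\omega^{-1}}\bigr)^{1/b_j}$, so $n\le 2^k\bigl(\tfrac{C+2\log\e^{-1}}{\log\omega^{-1}}\bigr)^{B}\le C_\delta\log^{B+\delta}\bigl(1+\tfrac1\e\bigr)$, where $2^k\le 2^{C_0}\log^\delta\tfrac1\e$ supplies the extra $\delta$ in the exponent. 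Without this truncation argument (or an equivalent use of the summability of $1/b_j$ to kill all but $O(\log\log\e^{-1})$ factors), the claimed $s$-independent bound does not follow.
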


\begin{proof}
We first prove that $e^2(A_{n,s},K_{s,\bsa,\bsb,\omega})\le\e^2$. Note that $m_j$ is defined such that
$$
\omega^{a_j(2 m_j)^{b_j}} \frac{\sqrt{8\pi}}{1-\omega^2}
\le \frac6{\pi^2}\ \frac{\log(1+\e^2)}{j^2}.
$$
Therefore
\begin{align*}
e^2(A_{n,s},K_{s,\bsa,\bsb,\omega})\le&
-1+\prod_{j=1}^s\left(1+\frac6{\pi^2}\,\frac{\log(1+\e^2)}{j^2}\right)\\
=&-1+\exp\left(\sum_{j=1}^s\log\left(1+\frac6{\pi^2}\,
\frac{\log(1+\e^2)}{j^2}\right)\right)\\
\le&-1 +\exp\left(\frac6{\pi^2}\,
\log(1+\e^2)\ \sum_{j=1}^sj^{-2}\right)\\
\le&-1+\exp\left(\log(1+\e^2)\right)=\e^2,
\end{align*}
as claimed.

We now estimate $m_j$ and then $n=\prod_{j=1}^sm_j$.
Clearly, $m_j\ge1$ for all $j\in\NN$. We prove that $m_j=1$ for large $j$.
Indeed, $m_j=1$ if
\begin{equation}\label{lasteq}
a_j 2^{b_j}\log \omega^{-1}\ge
\log\left(\frac{\sqrt{8\pi}}{1-\omega^2}\frac{\pi^2}6\,\frac{j^2}{\log(1+\e^2)}\right).
\end{equation}

Let $\delta>0$. From $\sum_k \frac{1}{b_k}< \infty$ it follows with the Cauchy condensation test that also $\sum_k \frac{2^k}{b_{2^k}}< \infty$ and hence $\lim_k 2^k/b_{2^k} =0$. Hence, we find that $b_{2^k}^{-1} \le \frac{\delta}{2^{k+1}}$ for $k$ large enough. For large enough $j$ with $2^k \le j \le 2^{k+1}$ we then obtain $$\frac{1}{b_j} \le \frac{1}{b_{2^{k}}} \le \frac{\delta}{2^{k+1}} \le \frac{\delta}{j}$$ or, equivalently, $2^{b_j} \ge 2^{j/\delta}$. Hence, there exists a positive $\beta_1$ such that $$2^{b_j} \ge \beta_1 2^{j/\delta}\ \ \ \ \mbox{ for all }\ \ \ j \in \NN.$$

Hence $a_j2^{b_j}\ge \beta_1 2^{j/\delta}$ and then the inequality \eqref{lasteq} holds for all
$j\ge j^*$, where $j^*$ is the smallest positive integer for which
$$
j^*\ge \frac{\delta}{\log\,2}\, \log\left(\frac1{\beta_1 \log \omega^{-1}}\ \log\left(
\frac{\sqrt{8\pi}}{1-\omega^2}\frac{\pi^2}6\, \frac{[j^*]^2}{\log(1+\e^2)}\right)\right).
$$
Clearly,
$$
j^*= \frac{\delta}{\log\,2}\log\ \log\ \e^{-1}+\mathcal{O}(1) \ \ \ \
\mbox{as}\ \ \ \ \e\to0.
$$
Without loss of generality we can restrict ourselves to $\e\le {\rm e}^{-{\rm e}}$, where ${\rm e}=\exp(1)$,
so that $\log\,\log\,\e^{-1}\ge1$. Then there exists a number $C_0\ge1$,
independent of $\e$ and $s$,  such that
$$
m_j=1\ \ \ \ \mbox{for all}\ \ \ \ j> \left\lfloor
C_0+\frac{\delta}{\log\,2}\,\log\ \log\ \e^{-1}\right\rfloor.
$$

We now estimate $m_j$ for $j\le \left\lfloor C_0+\frac{\delta}{\log\,2}\,
\log\,\log\,\e^{-1}\right\rfloor$. Note that
$$
\log\left(\frac{\sqrt{8\pi}}{1-\omega^2}\frac{\pi^2}6\ \frac{j^2}{\log(1+\e^2)}\right)=
\log\left(\frac{\sqrt{8\pi}}{1-\omega^2}\frac{\pi^2}{6\,\log(1+\e^2)}\right)\ +\
\log(j^2).
$$
Then $a_j 2^{b_j}\ge \beta_1 2^{j/\delta}$ also implies that
$$
C(\bsa,\bsb):= \sup_{j\in\NN}\frac{\log(j^2)}{a_j 2^{b_j}}<\infty.
$$
Furthermore, there exists a number $C_1\ge1$, independent of $\e$ and $s$
such that
$$
\log\left(\frac{\sqrt{8\pi}}{1-\omega^2}\frac{\pi^2}{6\,\log(1+\e^2)}\right)\le C_1+2\log\frac1\e\ \ \ \
\mbox{for all}\ \ \ \ \e\in(0,1).
$$
This yields
$$
m_j\le1+\left(\frac{C(\bsa,\bsb)+C_1+2\log \e^{-1}}{\log \omega^{-1}}\right)^{1/b_j}
\ \ \ \ \ \mbox{for all}\ \ \ \ j\le
\left\lfloor C_0+\frac{\delta}{\log\,2}\,\log\,\log
\frac1\e\right\rfloor.
$$
Let
$$
k=\min\left(s,\left\lfloor C_0+\frac{\delta}{\log\,2}
\,\log\,\log\frac1\e\right\rfloor\right).
$$
Then for $C=\max\left(C(\bsa,\bsb)+C_1,-2{\rm e}+\log \omega^{-1}\right)$ we have
$$
\max\left(1,\frac{C(\bsa,\bsb)+C_1+2\log\e^{-1}}{\log \omega^{-1}}\right)
\le \frac{C+2\log\e^{-1}}{\log \omega^{-1}}
$$
and
\begin{align*}
n=&\prod_{j=1}^sm_j=\prod_{j=1}^km_j\le\prod_{j=1}^k
\left(1+\left(\frac{C+2\log\e^{-1}} {\log \omega^{-1}}\right)^{1/b_j}\right)\\
=&\left(\frac{C+2\log\e^{-1}} {\log \omega^{-1}}
\right)^{\sum_{j=1}^k1/b_j}\,
\prod_{j=1}^k\left(1+
\left(\frac{\log \omega^{-1}}{C+2\log\e^{-1}}\right)^{1/b_j}\right)\\
\le&
\left(\frac{C+2\log\e^{-1}} {\log \omega^{-1}}\right)^{B}\,2^{k}.
\end{align*}
Note that
$$
2^k\le 2^{C_0}\,\exp\left(\delta \log\,\log\frac{1}{\e}\right)=2^{C_0}\,\log^{\delta}\frac1\e.
$$
Therefore there is a positive number $C_\delta$ independent of $\e^{-1}$ and $s$
such that
$$
n\le C_\delta\,\log^{B+\delta}\left(1+\frac1\e\right),
$$
as claimed. This completes the proof.
\end{proof}

Now we prove the sufficient condition for EC-WT stated in Item~\ref{wtsuff} of Theorem~\ref{thmint(u)exp}.
\begin{theorem}\label{thm_suff_ecwt}
Assume that there exist $\eta>0$ and $\beta>0$ such that $$a_j 2^{b_j} \ge \beta j^{1+\eta}\ \ \ \ \mbox{ for all }\ \ j \in \NN.$$ Then we have EC-WT.
\end{theorem}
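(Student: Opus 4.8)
The plan is to exhibit an explicit Cartesian product of Gauss-Hermite rules that already attains the required error, and then to bound its node count $n$ by $\log n(\e,s)=\mathcal{O}\big((1+\log\e^{-1})^{\theta}\big)$ for some $\theta<1$, which is more than enough for EC-WT. Concretely I would use the \emph{same} orders $m_1,\dots,m_s$ as in the proof of Theorem~\ref{thm_suff_ecspt}, namely
\[
m_j=\left\lceil\left(\frac{\log\left(\frac{\sqrt{8\pi}}{1-\omega^2}\,\frac{\pi^2}{6}\,\frac{j^2}{\log(1+\e^2)}\right)}{a_j 2^{b_j}\,\log\omega^{-1}}\right)^{1/b_j}\right\rceil
\]
(with the convention $m_j=1$ whenever the numerator $\log\big(\frac{\sqrt{8\pi}}{1-\omega^2}\frac{\pi^2}{6}\frac{j^2}{\log(1+\e^2)}\big)$ is non-positive), and set $n=\prod_{j=1}^s m_j$ with $A_{n,s}$ the product rule~\eqref{cpghr}. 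The estimate $e(A_{n,s},K_{s,\bsa,\bsb,\omega})\le\e$ then holds for exactly the reason given in the first part of the proof of Theorem~\ref{thm_suff_ecspt}: by the definition of $m_j$ one has $a_j(2m_j)^{b_j}\log\omega^{-1}\ge\log\big(\frac{\sqrt{8\pi}}{1-\omega^2}\frac{\pi^2}{6}\frac{j^2}{\log(1+\e^2)}\big)$, hence $\frac{\sqrt{8\pi}}{1-\omega^2}\,\omega^{a_j(2m_j)^{b_j}}\le\frac{6}{\pi^2}\,\frac{\log(1+\e^2)}{j^2}$, and plugging this into Proposition~\ref{wc_ghrs} and using $\log(1+x)\le x$ together with $\sum_{j\ge1}j^{-2}=\pi^2/6$ gives $e^2(A_{n,s},K_{s,\bsa,\bsb,\omega})\le\e^2$. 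Note that this part of that proof does not use $B<\infty$, so the only thing left is to bound $\prod_{j=1}^s m_j$ using the present hypothesis $a_j 2^{b_j}\ge\beta j^{1+\eta}$ in place of $B<\infty$.

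The next step is to show that $m_j=1$ for all but $\mathcal{O}\big((1+\log\e^{-1})^{1/(1+\eta)}\big)$ of the indices $j$. Since $\log(1+\e^2)\ge\e^2/2$ for $\e\in(0,1)$, one has $m_j=1$ as soon as
\[
a_j 2^{b_j}\log\omega^{-1}\ \ge\ \log\Big(\tfrac{\sqrt{8\pi}}{1-\omega^2}\,\tfrac{\pi^2}{6}\Big)+\log 2+2\log j+2\log\e^{-1}.
\]
As the left-hand side is at least $\beta j^{1+\eta}\log\omega^{-1}$ while the right-hand side grows only like $2\log j+2\log\e^{-1}$, a routine ``polynomial beats logarithm'' comparison provides a constant $\kappa=\kappa(\omega,\beta,\eta)>0$ such that $m_j=1$ for every $j\ge u:=\big\lceil\kappa\,(1+\log\e^{-1})^{1/(1+\eta)}\big\rceil$. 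For the remaining indices $j<u$, using $a_j 2^{b_j}\ge\beta$, $b_j\ge1$ (so that $y^{1/b_j}\le\max(1,y)$), the bound $\log\tfrac{1}{\log(1+\e^2)}\le\log 2+2\log\e^{-1}$, and $\log j<\log u\le\log(\kappa+1)+\log\e^{-1}$, the definition of $m_j$ gives
\[
m_j\ \le\ 2+\frac{1}{\beta\log\omega^{-1}}\,\log\!\Big(\tfrac{\sqrt{8\pi}}{1-\omega^2}\,\tfrac{\pi^2}{6}\,\tfrac{j^2}{\log(1+\e^2)}\Big)\ \le\ C\,(1+\log\e^{-1})
\]
for a constant $C=C(\omega,\beta,\eta)$.

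Combining the two displays, $\log n\le u\,\log\!\big(C(1+\log\e^{-1})\big)$. Fixing $\alpha_0>0$ small enough that $\theta:=\tfrac{1}{1+\eta}+\alpha_0<1$ (possible since $\eta>0$) and using $\log\!\big(C(1+\log\e^{-1})\big)=\mathcal{O}\big((1+\log\e^{-1})^{\alpha_0}\big)$, we obtain $\log n\le C'\,(1+\log\e^{-1})^{\theta}$ for a constant $C'$ independent of $s$ and $\e$, whence
\[
\frac{\log n(\e,s)}{s+\log\e^{-1}}\ \le\ \frac{C'\,(1+\log\e^{-1})^{\theta}}{s+\log\e^{-1}}\ \longrightarrow\ 0
\]
as $s+\log\e^{-1}\to\infty$: if $\log\e^{-1}\to\infty$ the bound is $\mathcal{O}\big((\log\e^{-1})^{\theta-1}\big)\to0$ because $\theta<1$, while if $\log\e^{-1}$ stays bounded then $s\to\infty$ and the numerator stays bounded. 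This is precisely EC-WT. I expect no genuinely hard step once the cutoff $u$ is read off from $a_j 2^{b_j}\ge\beta j^{1+\eta}$; the only mildly delicate points are the convention $m_j=1$ when the ceiling argument is non-positive (harmless, since then $\omega^{a_j(2m_j)^{b_j}}<1$ already makes the per-coordinate estimate hold) and the final case split according to whether $\log\e^{-1}$ or $s$ drives $s+\log\e^{-1}\to\infty$.
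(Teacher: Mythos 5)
Your proof is correct and follows essentially the same route as the paper: a Cartesian product of Gauss-Hermite rules with coordinate-dependent orders, the error bound from Proposition~\ref{wc_ghrs} with a per-coordinate error budget, a cutoff index of order $(1+\log\e^{-1})^{1/(1+\eta)}$ beyond which $m_j=1$ thanks to $a_j2^{b_j}\ge\beta j^{1+\eta}$, and the resulting bound $\log n(\e,s)=\mathcal{O}\big((\log\e^{-1})^{1/(1+\eta)}\log\log\e^{-1}\big)$ giving EC-WT. The only (harmless) difference is the budget allocation: you reuse the EC-SPT weights $\tfrac{6}{\pi^2 j^2}$, while the paper allocates proportionally to $1/(A\,a_j2^{b_j})$ with $A=\sum_j 1/(a_j2^{b_j})<\infty$; both yield the same conclusion.
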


\begin{proof}
Let $$
A:=\sum_{j=1}^\infty\frac1{a_j 2^{b_j}}<\infty
$$
and let $m_1,\ldots,m_s$ be given by
$$
m_j=\left\lceil
\left(\frac{\log\left(\frac{\sqrt{8\pi}}{1-\omega^2}\,A\,\frac{a_j 2^{b_j}}{\log(1+\e^2)}\right)}
{a_j 2^{b_j}\,\log \omega^{-1}}\right)^{1/b_j}\right\rceil.
$$
Note that $m_j$ is defined such that
$$
\omega^{a_j(2 m_j)^{b_j} } \frac{\sqrt{8 \pi}}{1-\omega^2}
\le \frac1{A}\ \frac{\log(1+\e^2)}{a_j 2^{b_j}}
$$
and therefore
\begin{align*}
e^2(A_{n,s},K_{s,\bsa,\bsb,\omega})\le&
-1+\prod_{j=1}^s\left(1+\frac1{A}\,\frac{\log(1+\e^2)}{a_j 2^{b_j}}\right)\\
=&-1+\exp\left(\sum_{j=1}^s\log\left(1+\frac1{A}\,
\frac{\log(1+\e^2)}{a_j 2^{b_j}}\right)\right)\\
\le&-1 +\exp\left(\frac1{A}\,
\log(1+\e^2)\ \sum_{j=1}^s \frac1{a_j 2^{b_j}}\right)\\
\le&-1+\exp\left(\log(1+\e^2)\right)=\e^2,
\end{align*}
as claimed.

We now estimate $m_j$ and then $\log n=\sum_{j=1}^s \log m_j$.
Clearly, $m_j\ge1$ for all $j\in\NN$. We prove that $m_j=1$ for large $j$.
Indeed, $m_j=1$ if
$$
a_j 2^{b_j}\log \omega^{-1}\ge
\log\left(\frac{\sqrt{8 \pi}}{1-\omega^2} A\,\frac{a_j 2^{b_j}}{\log(1+\e^2)}\right).
$$
This holds if and only if 
$$
\log(1+\varepsilon^2) \ge \frac{\sqrt{8 \pi}}{1-\omega^2} A a_j 2^{b_j} \omega^{a_j 2^{b_j}}. 
$$
Let $\omega_1 \in (\omega,1)$ and let $$K=\sup_{x \in \RR^+} \frac{x}{(\omega_1/\omega)^x}.$$ Then we have 
$$\frac{\sqrt{8 \pi}}{1-\omega^2} A a_j 2^{b_j} \omega^{a_j 2^{b_j}} \le \frac{K\, A \sqrt{8\pi}}{1-\omega^2} \omega_1^{a_j 2^{b_j}}.$$
Hence $$\log(1+\varepsilon^2) \ge \frac{K\, A \sqrt{8\pi}}{1-\omega^2} \omega_1^{a_j 2^{b_j}}$$ implies that $m_j=1$.
The last inequality is equivalent to $$a_j 2^{b_j} \ge \frac1{\log \omega_1^{-1}} \log \left( \frac{1-\omega^2}{K \, A \sqrt{8\pi}} \frac{1}{\log(1+\varepsilon^2)}\right).$$

Since $a_j 2^{b_j}> \beta j^{1+\eta}$ a sufficient condition for $m_j=1$ is $$\beta j^{1+\eta} \ge \frac1{\log \omega_1^{-1}} \log \left( \frac{1-\omega^2}{K \, A \sqrt{8\pi}} \frac{1}{\log(1+\varepsilon^2)}\right)= \frac{2}{\log \omega_1^{-1}} \log \frac{1}{\varepsilon}+\mathcal{O}(1).$$
Then there exists a number $C_0\ge1$,
independent of $\e$ and $s$,  such that
$$
m_j=1\ \ \ \ \mbox{for all}\ \ \ \ j> \left(\frac{2}{\beta \, \log \omega_1^{-1}} \log \frac{1}{\varepsilon}+C_0\right)^{1/(1+\eta)}.
$$

We now estimate $m_j$ for $j\le \left\lfloor \left(\frac{2}{\beta \, \log \omega_1^{-1}}\log \frac{1}{\varepsilon}+C_0\right)^{1/(1+\eta)} \right\rfloor$. Note that
$$
\log\left(\frac{\sqrt{8 \pi}}{1-\omega^2} A\ \frac{a_j 2^{b_j}}{\log(1+\e^2)}\right)=
\log\left(\frac{\sqrt{8 \pi}}{1-\omega^2} \frac{A}{\log(1+\e^2)}\right)\ +\
\log(a_j 2^{b_j}).
$$
Then $a_j 2^{b_j}\rightarrow \infty$ also implies that
$$
C(\bsa,\bsb):= \sup_{j\in\NN}\frac{\log a_j 2^{b_j}}{a_j 2^{b_j}}<\infty.
$$
Furthermore, there exists a number $C_1\ge1$, independent of $\e$ and $s$
such that
$$
\log\left(\frac{\sqrt{8 \pi}}{1-\omega^2}\frac{A}{\log(1+\e^2)}\right)\le C_1+2\log\frac1\e\ \ \ \
\mbox{for all}\ \ \ \ \e\in(0,1).
$$
This yields
$$
m_j\le1+\left(\frac{C(\bsa,\bsb)+C_1+2\log \e^{-1}}{\log \omega^{-1}}\right)^{1/b_j}
\ \ \ \ \ \mbox{for all}\ \ \ \ j\le
\left\lfloor \left(\frac{2}{\beta \, \log \omega_1^{-1}} \log \frac{1}{\varepsilon}+C_0\right)^{1/(1+\eta)} \right\rfloor.
$$
Let
$$
k=\min\left(s,\left\lfloor \left(\frac{2}{\beta\,\log \omega_1^{-1}} \log \frac{1}{\varepsilon}+C_0\right)^{1/(1+\eta)} \right\rfloor\right).
$$
Then for $C=\max\left(C(\bsa,\bsb)+C_1,\log \omega^{-1}\right)$ we have
$$
\max\left(1,\frac{C(\bsa,\bsb)+C_1+2\log\e^{-1}}{\log \omega^{-1}}\right)
\le \frac{C+2\log\e^{-1}}{\log \omega^{-1}}
$$
and
\begin{align*}
n=&\prod_{j=1}^sm_j=\prod_{j=1}^km_j\le\prod_{j=1}^k
\left(1+\left(\frac{C+2\log\e^{-1}} {\log \omega^{-1}}\right)^{1/b_j}\right)\\
=&\left(\frac{C+2\log\e^{-1}} {\log \omega^{-1}}
\right)^{\sum_{j=1}^k1/b_j}\,
\prod_{j=1}^k\left(1+
\left(\frac{\log \omega^{-1}}{C+2\log\e^{-1}}\right)^{1/b_j}\right)\\
\le&
\left(\frac{C+2\log\e^{-1}} {\log \omega^{-1}}\right)^{k}\,2^{k}.
\end{align*}
Hence $$\log n \le k \log \left(\frac{C+2\log\e^{-1}} {\log \omega^{-1}}\right) + k \log 2.$$
Note that for $\log \e^{-1} \rightarrow \infty$ we have $$k \le C_2 (\log\e^{-1})^{1/(1+\eta)}$$ with some $C_2>0$ independent of $s$ and $\varepsilon$.
Therefore we have

$$\log n \le C_2 (\log\e^{-1})^{1/(1+\eta)} \left(2+\log\left(\frac{C+2\log\e^{-1}} {\log \omega^{-1}}\right)\right)=\mathcal{O}\left((\log\e^{-1})^{1/(1+\eta)} \log \log \e^{-1}\right)$$ 
with an implied constant independent of $s$ and $\varepsilon$.

All together it follows that the logarithmic information complexity satisfies $$\log n(\varepsilon,s) = \mathcal{O}\left((\log\e^{-1})^{1/(1+\eta)} \log \log \e^{-1}\right)$$ with an implied constant independent of $s$ and $\varepsilon$.

Therefore we obtain $$\lim_{s+\log \e^{-1}\rightarrow \infty} \frac{\log n(\varepsilon,s)}{s+\log \e^{-1}} =0$$ and hence we have EC-WT as claimed. 
\end{proof}

\begin{rem}\rm
It follows easily from the above proof that the sufficient condition for EC-WT in Theorem~\ref{thmint(u)exp} and  \ref{thm_suff_ecwt} can be improved in the sense that it is enough to demand that $a_j 2^{b_j} \ge \psi(j)$ for some invertible function $\psi:\NN \rightarrow \RR^+$ satisfying $$\sum_j \frac{1}{\psi(j)} < \infty \ \ \mbox{ and }\ \  \psi^{-1}(j)=o\left(\frac{j}{\log j}\right).$$ 
\end{rem}

\begin{appendix}
\section{Analyticity of the functions in $\cH(K_{s,\bsa,\bsb,\omega})$}\label{app_a}

\begin{prop}
Let $f\in\cH(K_{s,\bsa,\bsb,\omega})$. Then $f$ is analytic.
\end{prop}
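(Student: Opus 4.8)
The plan is to exhibit $f$ as the restriction to $\RR^s$ of a function that is holomorphic on all of $\mathbb{C}^s$. Concretely, I would regard each Hermite polynomial $H_{\bsk}$ as an entire function of a complex vector $\bsz=(z_1,\dots,z_s)\in\mathbb{C}^s$ and show that the Hermite expansion $f=\sum_{\bsk\in\NN_0^s}\widehat f(\bsk)\,H_{\bsk}$ converges absolutely and uniformly on every polydisc $\set{\bsz\in\mathbb{C}^s:\abs{z_j}\le R,\ 1\le j\le s}$. By the Weierstrass convergence theorem the limit $F$ is then holomorphic on $\mathbb{C}^s$, so $F|_{\RR^s}$ is real-analytic; and since point evaluations are continuous on the reproducing kernel Hilbert space $\cH(K_{s,\bsa,\bsb,\omega})$, the partial sums of the Hermite series converge to $f$ pointwise on $\RR^s$ as well, whence $F|_{\RR^s}=f$ and $f$ is analytic.

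First I would assemble two ingredients. (i) A bound for Hermite polynomials on complex discs. Starting from the generating-function identity $\sum_{k\ge0}\frac{H_k(z)}{\sqrt{k!}}\,t^k=\exp(zt-t^2/2)$ and applying Cauchy's estimate on the circle $\abs t=\sqrt k$, together with Stirling's inequality $k!\le\mathrm{e}\sqrt k\,(k/\mathrm{e})^k$, one obtains an absolute constant $c$ with
$$\abs{H_k(z)}\ \le\ c\,(1+k)^{1/4}\,\mathrm{e}^{R\sqrt k}\qquad\text{for all }k\in\NN_0\text{ and all }z\in\mathbb{C}\text{ with }\abs z\le R$$
(the case $k=0$ being trivial, since $H_0\equiv1$). (ii) A bound for the Hermite coefficients: from $\norm{f}_{K_{s,\bsa,\bsb,\omega}}^2=\sum_{\bsk\in\NN_0^s}\abs{\widehat f(\bsk)}^2\,\omega^{-\abs{\bsk}_{\bsa,\bsb}}$ one reads off $\abs{\widehat f(\bsk)}\le\norm{f}_{K_{s,\bsa,\bsb,\omega}}\,\omega^{\abs{\bsk}_{\bsa,\bsb}/2}$ for every $\bsk\in\NN_0^s$.

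Combining (i) and (ii), for $\bsz$ in the polydisc of radius $R$ I would estimate
$$\sum_{\bsk\in\NN_0^s}\abs{\widehat f(\bsk)}\,\abs{H_{\bsk}(\bsz)}\ \le\ \norm{f}_{K_{s,\bsa,\bsb,\omega}}\,\prod_{j=1}^s\Bigl(\sum_{k=0}^\infty\omega^{a_jk^{b_j}/2}\,c\,(1+k)^{1/4}\,\mathrm{e}^{R\sqrt k}\Bigr),$$
and note that each of the $s$ inner series converges: since $a_j\ge1$ and $b_j\ge1$ we have $\omega^{a_jk^{b_j}/2}\le\omega^{k/2}=\mathrm{e}^{-\frac12 k\log\omega^{-1}}$, so the general term decays geometrically in $k$ and in particular dominates the subexponential factor $(1+k)^{1/4}\mathrm{e}^{R\sqrt k}$. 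As $R$ is arbitrary this gives locally uniform convergence on $\mathbb{C}^s$, and the scheme described above goes through.

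The one delicate point is estimate (i): one really needs the exponent $\sqrt k$ (not $k$) in the bound for $H_k$, so that after the $\sqrt{k!}$ coming from the normalisation of $H_k$ cancels against the factor $k^{-k/2}$ produced by Cauchy's formula on the circle of radius $\sqrt k$ — and, crucially, the accompanying $\mathrm{e}^{k/2}$ cancels against the $\mathrm{e}^{-k/2}$ from Stirling — what remains grows only subexponentially in $k$ and is therefore absorbed by the geometric decay $\omega^{a_jk^{b_j}/2}$ guaranteed by $a_j,b_j\ge1$. The remaining steps — the Weierstrass argument and the identification of the holomorphic extension with $f$ via continuity of point evaluations in the RKHS — are routine.
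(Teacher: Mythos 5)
Your proof is correct, but it takes a genuinely different route from the paper. The paper stays entirely on the real side: it first reduces to the space $\cH(K_{s,\1,\1,\omega})$ (using $a_j,b_j\ge 1$), differentiates the Hermite expansion term by term, bounds $\bigl|\partial^{|\bsl|}f/\partial\bsx^{\bsl}\bigr|$ via Cauchy--Schwarz together with Cramer's inequality $|H_k(x)|\le\varphi(x)^{-1/2}$, and then verifies real-analyticity directly by showing that the Taylor series converges and its integral remainder tends to zero on the ball $\|\bsx-\bsy\|_\infty^2<(1-\omega)/\omega$. You instead work on the complex side: the generating-function/Cauchy-estimate bound $|H_k(z)|\le c\,(1+k)^{1/4}\mathrm{e}^{R\sqrt{k}}$ on $|z|\le R$ (which checks out, including the cancellation of $\sqrt{k!}$ against $k^{-k/2}\mathrm{e}^{k/2}$ via Stirling), combined with the coefficient bound $|\widehat f(\bsk)|\le\|f\|_{K_{s,\bsa,\bsb,\omega}}\,\omega^{|\bsk|_{\bsa,\bsb}/2}$ and the reduction $\omega^{a_jk^{b_j}/2}\le\omega^{k/2}$, gives locally uniform absolute convergence of the Hermite series on all of $\mathbb{C}^s$, so by Weierstrass the sum is entire; the identification with $f$ on $\RR^s$ is legitimate because $\{\omega^{|\bsk|_{\bsa,\bsb}/2}H_{\bsk}\}$ is an orthonormal basis, so the partial sums converge to $f$ in the RKHS norm and hence pointwise (continuity of point evaluations), exactly as you indicate. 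What each approach buys: yours yields the stronger conclusion that every $f\in\cH(K_{s,\bsa,\bsb,\omega})$ extends to an entire function on $\mathbb{C}^s$, with analyticity on $\RR^s$ as an immediate corollary, whereas the paper's real-variable argument is more elementary (no several-complex-variables input) and produces explicit derivative bounds and an explicit local radius of Taylor convergence, which can be of independent use.
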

\begin{proof}
Since $\inf_j a_j\geq 1$ and $\inf_j b_j=1$, we have $\cH(K_{s,\bsa,\bsb,\omega})\subseteq\cH(K_{s,\1,\1,\omega})$ with $\1=\{1\}_{j\geq 1}$. 
Therefore, it is sufficient to show analyticity for functions $f$ which belong to $\cH(K_{s,\1,\1,\omega})$. Let $\bsl=(l_1,l_2,\ldots,l_s)\in\NN_0^s$ 
be a multiindex with $|\bsl|=l_1+l_2+\cdots+l_s$ and let $$\frac{\partial^{|\bsl|}}{\partial \bsx^{\bsl}} = \frac{\partial^{|\bsl|}}{\partial x_1^{l_1} \partial x_2^{l_2}\ldots \partial x_s^{l_s}}.$$

For any $f\in\cH(K_{s,\1,\1,\omega})$ we obtain 
\begin{align*}
\frac{\partial^{|\bsl|}}{\partial \bsx^{\bsl}}f(\bsx)=\sum_{\bsk\in\NN_0^s}\widehat{f}(\bsk)\frac{\partial^{|\bsl|}}{\partial 
\bsx^{\bsl}}H_{\bsk}(\bsx)=\sum_{\bsk\geq\bsl}\widehat{f}(\bsk)\sqrt{\frac{\bsk!}{(\bsk-\bsl)!}}H_{\bsk-\bsl}(\bsx).
\end{align*}
Then,
\begin{align*}
\left\vert \frac{\partial^{|\bsl|}}{\partial \bsx^{\bsl}}f(\bsx)\right\vert&=\left\vert \sum_{\bsk\geq\bsl}\left(\widehat{f}(\bsk)\left(\omega^{|\bsk|}\right)^{-1/2}\right)\left(\left(\omega^{|\bsk|}\right)^{1/2}\sqrt{\frac{\bsk!}{(\bsk-\bsl)!}}H_{\bsk-\bsl}(\bsx)\right)\right\vert\\
&\leq \|f\|_{\cH(K_{s,\1,\1,\omega})}\frac{1}{\sqrt{\varphi_s(\bsx)}}\left(\sum_{\bsk\geq\bsl}\omega^{|\bsk|}\frac{\bsk!}{(\bsk-\bsl)!}\right)^{1/2}\\
&=\|f\|_{\cH(K_{s,\1,\1,\omega})}\frac{1}{\sqrt{\varphi_s(\bsx)}} \left(\sum_{\bsk\geq\bsl}\prod_{j=1}^{s}(l_j!)^2\omega^{k_j}\frac{k_j!}{(k_j-l_j)!(l_j!)^2}\right)^{1/2}\\
&\leq\|f\|_{\cH(K_{s,\1,\1,\omega})}\frac{\bsl!}{\sqrt{\varphi_s(\bsx)}} \prod_{j=1}^{s}\left(\sum_{k=0}^{\infty}\binom{k}{l_j}\frac{\omega^{k}}{l_j!}\right)^{1/2}\\
&\leq\|f\|_{\cH(K_{s,\1,\1,\omega})}\frac{\bsl!}{\sqrt{\varphi_s(\bsx)}}\prod_{j=1}^{s}\left(\sum_{k=0}^{\infty}\binom{k}{l_j}\omega^k\right)^{1/2}\\
&\leq\|f\|_{\cH(K_{s,\1,\1,\omega})}\frac{\bsl!}{\sqrt{\varphi_s(\bsx)}}\prod_{j=1}^{s}\left(\frac{\omega^{l_j}}{(1-\omega)^{l_j+1}}\right)^{1/2},
\end{align*}
where $\bsl!=\prod_{j=1}^s (l_j!)$.
Now we show that $f$ can be locally represented by its Taylor expansion. For any $\bsy\in\RR^s$ and any $\bsx\in\RR^s$ with $\|\bsx-\bsy\|_\infty^2<\frac{1-\omega}{\omega}$, 
\begin{align*}
\sum_{\bsl\in\NN_0^s}\frac{1}{\bsl!}\frac{\partial^{|\bsl|}}{\partial\bsx^{\bsl}}f(\bsy)\prod_{j=1}^{s}(x_j-y_j)^{l_j}&\leq\|f\|_{\cH(K_{s,\1,\1,\omega})}\frac{1}{\sqrt{\varphi_s(\bsy)}} \sum_{\bsl\in\NN_0^s}\prod_{j=1}^{s}\left(\frac{\omega^{l_j}(x_j-y_j)^{2\ell_j}}{(1-\omega)^{l_j+1}}\right)^{1/2}\\
&\leq\|f\|_{\cH(K_{s,\1,\1,\omega})}\frac{1}{\sqrt{\varphi_s(\bsy)}} \left(\frac{1}{1-\omega}\sum_{l=0}^{\infty}\left(\frac{\omega\|\bsx-\bsy\|_\infty^{2}}{1-\omega}\right)^{l}\right)^{s/2}\\
&\leq\|f\|_{\cH(K_{s,\1,\1,\omega})}\frac{1}{\sqrt{\varphi_s(\bsy)}} \left(\frac{1}{1-\omega-\omega\|\bsx-\bsy\|_\infty^2}\right)^{s/2}<\infty.
\end{align*}
It remains to show that the remainder $R_n$ of the Taylor polynomial, given by
\begin{align*}
R_n:=\sum_{|\bsk|=n+1}\frac{n+1}{\bsk!}(\bsx-\bsy)^{\bsk}\int_{0}^{1}(1-t)^n \frac{\partial^{|\bsk|}}{\partial\bsx^{\bsk}}f(\bsy+t(\bsx-\bsy)) \rd t
\end{align*}
vanishes if $n$ goes to infinity. We have
\begin{align*}
\vert R_n\vert&\leq\sum_{|\bsk|=n+1}\frac{n+1}{\bsk!}\vert\bsx-\bsy\vert^{\bsk}\int_{0}^{1}|1-t|^n \left|\frac{\partial^{|\bsk|}}{\partial\bsx^{\bsk}}f(\bsy+t(\bsx-\bsy))\right|\rd t\\
&\leq \sum_{|\bsk|=n+1}(n+1)\vert\bsx-\bsy\vert^{\bsk}\int_{0}^{1} \|f\|_{\cH(K_{s,\1,\1,\omega})}\frac{|1-t|^n}{\sqrt{\varphi_s(\bsy+t(\bsx-\bsy))}}\prod_{j=1}^{s}\left(\frac{\omega^{k_j}}{(1-\omega)^{k_j+1}}\right)^{1/2} \rd t\\
&\leq (n+1)\|f\|_{\cH(K_{s,\1,\1,\omega})}\left[\int_{0}^{1} \frac{|1-t|^n}{\sqrt{\varphi_s(\bsy+t(\bsx-\bsy))}}\rd t \right]\sum_{|\bsk|=n+1}\prod_{j=1}^{s}\left(\frac{\omega^{k_j}|x_j-y_j|^{2k_j}}{(1-\omega)^{k_j+1}}\right)^{1/2}.
\end{align*}
Since $\|\bsx-\bsy\|_\infty<\sqrt{\frac{1-\omega}{\omega}}$, we have for any $j=1,\ldots,s$, 
\begin{align*}
\frac{1}{\sqrt{\varphi(y_j+t(x_j-y_j))}}\leq \begin{cases}\frac{1}{\sqrt{\varphi\left(y_j+\sqrt{(1-\omega)/\omega}\right)}} & \textnormal{if } y_j\geq 0\\\frac{1}{\sqrt{\varphi\left(y_j-\sqrt{(1-\omega)/\omega}\right)}} & \textnormal{if } y_j< 0 \end{cases}
\end{align*}
such that we we can bound $1/\sqrt{\varphi_s(\bsy+t(\bsx-\bsy))}$ by some constant $C_1$ independent of $n$ and $t$. Hence,
\begin{align*}
\vert R_n\vert&\leq C_1\|f\|_{\cH(K_{s,\1,\1,\omega})}(n+1)\left[\int_{0}^{1}|1-t|^n \rd t\right]\frac{1}{(1-\omega)^{s/2}}\sum_{|\bsk|=n+1}\prod_{j=1}^{s}\left(\frac{\omega|x_j-y_j|^2}{1-\omega}\right)^{k_j/2}\\
&\leq C_1\|f\|_{\cH(K_{s,\1,\1,\omega})}\frac{1}{(1-\omega)^{s/2}}\sum_{|\bsk|=n+1}\left(\frac{\omega\|\bsx-\bsy\|_\infty^2}{1-\omega}\right)^{|\bsk|/2}\\
&\leq \frac{C_1\|f\|_{\cH(K_{s,\1,\1,\omega})}}{(1-\omega)^{s/2}}\left(\frac{\omega\|\bsx-\bsy\|_\infty^2}{1-\omega}\right)^\frac{n+1}{2}\frac{(s+n)!}{(s-1)!(n+1)!}.
\end{align*}
Since $\omega\|\bsx-\bsy\|_\infty^2/(1-\omega)<1$ and $(s+n)!/((s-1)!(n+1)!)=O(n^{s-1})$, we get that $R_n\rightarrow 0$ as $n$ goes to $\infty$. Thus, $f$ is analytic.
\end{proof}

\section{An example}\label{app_b}

Let $f:\RR^s\rightarrow \RR$ be given as $f(\bsx)=f(x_1,\ldots,x_s)=\exp\left(\frac{1}{\sqrt{s}}\sum_{j=1}^{s}x_j\right)$. 
We now show that $f\in\cH(K_{s,\bsa,\bsone,\omega})$ for any weight sequences $\bsa$. 
The exponential generating function of the Hermite polynomials is given by $$\exp\left(t x-\frac{t^2}{2}\right)=\sum_{l=0}^{\infty}\frac{t^{l}}{\sqrt{l!}} H_{l}(x),$$
see \cite[p.\ 7]{B98}, and thus, we get for $t=\frac{1}{\sqrt{s}}$ that
\begin{align*}
\exp\left(\frac{x}{\sqrt{s}}\right)={\rm e}^{\frac{1}{2s}}\sum_{l=0}^{\infty}\frac{t^{l}}{\sqrt{l!}} H_{l}(x).
\end{align*}
For any $\bsk\in\NN_0^s$ the $\bsk$th Hermite coefficient of $f$ is
\begin{align*}
\widehat{f}(\bsk)&=\int_{\RR^s}f(\bsx)H_{\bsk}(\bsx)\varphi_s(\bsx)\rd \bsx\\
&=\prod_{j=1}^{s}\int_{\RR}\exp\left(\frac{x_j}{\sqrt{s}}\right)H_{k_j}(x_j)\varphi(x_j)\rd x_j\\
&=\sqrt{{\rm e}}\prod_{j=1}^{s}\sum_{l=0}^\infty\frac{1}{\sqrt{l!s^l}}\int_{\RR}H_{l}(x_j)H_{k_j}(x_j)\varphi(x_j)\rd x_j\\
&=\sqrt{{\rm e}}\prod_{j=1}^{s}\frac{1}{\sqrt{k_j!s^{k_j}}}.
\end{align*}
Hence, 
\begin{align*}
\|f\|_{K_{s,\bsa,\bsone,\omega}}&={\rm e} \sum_{\bsk\in\NN_0^s} \prod_{j=1}^{s}\omega^{-a_j{k_j}^{b_j}} \frac{1}{k_j!s^{k_j}}= \exp\left(1+\frac{1}{s}\sum_{j=1}^s \omega^{-a_j}\right)< \infty,
\end{align*}
and therefore $f \in \cH(K_{s,\bsa,\bsone,\omega})$ as desired.

\end{appendix}

\vspace{1cm}
\noindent{\bf Authors' Address:}\\

\noindent Institut f\"{u}r Finanzmathematik, Johannes Kepler Universit\"{a}t Linz, Altenbergerstra{\ss}e 69, A-4040 Linz, Austria.\\
\noindent Email: \{christian.irrgeher,peter.kritzer,gunther.leobacher,friedrich.pillichshammer\}@jku.at

\end{document}